\newcommand{\calf}{{\mathcal F}}
\newcommand{\calo}{{\mathcal O}}
\newcommand{\calu}{{\mathcal U}}
\newcommand{\cali}{{\mathcal J}}
\newcommand{\calw}{{\mathcal W}}
\newcommand{\calv}{{\mathcal V}}
\newcommand{\jono}{{\rm Jon}_o(\PP^n)}
\newcommand{\bir}{{\rm Bir}}
\newcommand{\birplane}{{\rm Bir}(\PP^2)}
\newcommand{\birtwo}{{\rm Bir}(\PP^2)}
\newcommand{\birn}{{\rm Bir}(\PP^n)}
\newcommand{\pgl}{{\rm PGL}}
\newcommand{\x}{{\bf x}}
\newcommand{\Deg}{{\rm Deg}}
\newcommand{\staro}{{\rm St}_o(\PP^n)}
\renewcommand{\star}{{\rm St}}
\newcommand{\PP}{{\mathbb P}}
\renewcommand{\P}{{\mathbb P}}
\newcommand{\A}{{\mathbb A}}
\newcommand{\C}{{\mathbb C}}
\newcommand{\N}{{\mathbb N}}
\newcommand{\Z}{{\mathbb Z}}
\newcommand{\tor}{\dashrightarrow }
\begin{document}

\title[ ]{Some remarks about the Zariski topology of the Cremona group}

\newtheorem{thm}{Theorem}
\newtheorem{pro}[thm]{Proposition}
\newtheorem{cor}[thm]{Corollary}
\newtheorem{rems}[thm]{Remarks}
\newtheorem{lem}[thm]{Lemma}
\newtheorem{defi}[thm]{Definition}
\newtheorem{con}{Conjecture}
\theoremstyle{remark}
\newtheorem{exa}[thm]{Example}
\newtheorem{exas}[thm]{Examples}

\newtheorem{rem}[thm]{Remark}

\begin{abstract}  For an algebraic variety $X$ we study the behavior of algebraic morphisms from an algebraic variety to the  
 group $\bir(X)$ of birational maps of $X$ and obtain, as application, some insight about the relationship between the so-called Zariski topology of $\bir(X)$ and the  algebraic structure of this group, in the case where $X$ is
rational.  
\end{abstract}

% \begin{center}
\author{Ivan Pan and Alvaro  Rittatore} 
\thanks{Both authors are partially supported by the ANII, MathAmSud and  CSIC-Udelar (Uruguay).}
% \end{center}
\address{Ivan Pan, Centro de Matemática, Facultad de Ciencias, Universidad de la Rep\'ublica, Igu\'a 4225, 11400 - Montevideo - URUGUAY} 
\email{ivan@cmat.edu.uy}
\address{Alvaro Rittatore, Centro de Matemática, Facultad de Ciencias, Universidad de la Rep\'ublica, Igu\'a 4225, 11400 - Montevideo - URUGUAY} \email{alvaro@cmat.edu.uy}
\maketitle
\section{Introduction}\label{intro}

Let $\Bbbk$ be an algebraically closed field and denote by $\PP^n$ the
projective space of dimension $n$ over $\Bbbk$. The set $\birn$  of
birational maps $f:\PP^n\tor\PP^n$ is the so-called Cremona group of
$\PP^n$. For  an element  $f\in\bir(\PP^n)$ there exist homogeneous
polynomials of the same degree  $f_0,\ldots, f_n\in
k[x_0,\ldots,x_n]$, without nontrivial common factors, such that if
$\x=(x_0:\cdots:x_n)$ is not a common zero of the $f_i$'s, then
$f(\x)=\bigl(f_0(\x):\cdots:f_n(\x)\bigr)$. The (algebraic) degree of $f$ is the
common degree of the $f_i$'s, and is 
denoted by $\deg(f)$.  

A natural way to produce an ``algebraic family'' of  birational maps
is to consider a birational map  $f=(f_0:\cdots:f_n)\in \birn$ and to
allow the coefficients of the $f_i$'s vary in an affine
(irreducible) $\Bbbk$-variety $T$. That is, we consider  polynomials
$f_0,\ldots, f_n\in \Bbbk[T]\otimes \Bbbk[x_0,\ldots,x_n]$, homogeneous and of
the same degree in $\x$ and we define $\varphi:T\to \birn$ by  
\[
\varphi(t,\x)=\bigr(f_0(t,\x):\cdots:f_n(t,\x)\bigl);
\]
in particular we assume that for all $t\in T$ the map $\varphi_t:=\varphi(t,\cdot):\PP^n\tor\PP^n$ is birational.  

As pointed out by Serre in \cite[\S 1.6]{Se} the family of the
topologies  on $\birn$ which make any such algebraic family a 
continuous
function, has a \emph{finest} element, designed in \emph{loc.~cit} as the \emph{Zariski Topology}
of $\birn$. Moreover, we can replace $\PP^n$ with an irreducible
algebraic variety $X$ of dimension $n$ and the same holds for
$\bir(X)$.

The aim of this work is to study the behavior of these ``morphisms''
$T\to\bir(X)$ and obtain, as application, some insight about the
relationship between the
topology and the  algebraic structure of the group $\bir(X)$,  where $X$ is
a rational 
variety.  

More precisely, in Section 2 we present some basic results about $\bir(X)$
that show  the relationship between the algebraic structure and the
Zariski topology.

In Section 3, the main one,  we deal with the case $X=\PP^n$, or
more generally the case where $X$ is a rational variety(see Lemma \ref{lem:functoriality}). We
begin  by stating  two deep
results about the connectedness and simplicity of $\birn$ proved in
\cite{Bla11} and \cite{CaLa} (Proposition \ref{pro3.3}) and extract as
an easy consequence that   a nontrivial normal subgroup
of $\bir(\PP^2)$ 
has trivial centralizer. 
 Next we prove that for a morphism $\varphi:T\to \birn$, the function
$t\mapsto \deg(\varphi_t)$ is lower semicontinuous (\S 3.2). This
result has some nice consequences:

\begin{itemize}

\item[(a)] every Cremona transformation of degree $d$ is a specialization of Cremona transformations of degree $>d$ (Corollary \ref{cor:degeneration});

\item[(b)] the degree map $\deg:\birn\to \Z$ is lower semicontinuous (\S 3.3);
 
\item[(c)] a morphism $T\to\birn$ maps constructible sets into constructible sets (\S 3.5);

\item[(d)] the Zariski topology of $\birn$ is not Noetherian (\S 3.6); 

\item[(e)]  there exist (explicit, non canonical) closed immersions of
  $\bir(\PP^{n-1})\hookrightarrow \birn$ (\S 3.7);

\item[(f)] the subgroup 
  consisting of the elements 
  $f\in\birn$ which stabilize the set of lines passing through a fixed
  point is closed (\S 3.7).   
\end{itemize}

%\noindent{\bf Acknowledgements} 
\section{Generalities}

 Following \cite[\S
2]{Dem} we have:

\begin{defi}\label{defi_pseudo}
A birational map $\varphi:T\times X\tor T\times X$, where $T$
and $X$ are $\Bbbk$-varieties and $X$ is irreducible, is said to be a
\emph{pseudo-automorphism of $T\times X$, over $T$},  if there exists
a dense open subset $U\subset T\times X$ such that:

\begin{itemize}
\item[(a)] $\varphi$ is defined on $U$;

\item[(b)] $U_t:=U\cap \bigl(\{t\}\times X\bigr)$ is dense in $\{t\}\times X$ for all
$t\in T$, and 
     
\item[(c)] there exists a morphism $f:U\to X$ such that
$\varphi|_{_{U}}(t,x)=\bigl(t,f(t,x)\bigr)$, and $\varphi|_{_{U_t}}: U_t\to \{t\}\times
X$ is a birational morphism. 
\end{itemize}
\end{defi}
In particular, a pseudo-automorphism $\varphi$ as above induces a
family $T\to \bir(X)$  of birational maps $\varphi_t:X\tor
X$. Following \cite{Bla11} we call this family an {\em algebraic
  family} in $\bir(X)$ or a {\em morphism} from $T$ to
$\bir(X)$. 

We will identify a morphism $\varphi:T\to\bir(X)$ with its
corresponding pseudo-automorphism and denote $\varphi_t=\varphi(t)$.      

Note that if $\varphi:T\to\bir(X)$ is a morphism, the map $\psi:T\to\bir(X)$ defined by $\psi_t=\varphi_t^{-1}$ is also a morphism where $\varphi_t^{-1}$ denotes the inverse map of $\varphi_t$. 
  
We say $\calf\subset \bir(X)$ is {\em closed} if its pullback under
every morphism $T\to\bir(X)$ is closed in $T$, for all $T$. This
defines the so-called  \emph{Zariski topology} on $\bir(X)$ (\cite{Mum},
\cite[\S 1.6]{Se}, \cite{Bla11}).

In order to define the Zariski topology, as above, it suffices to consider morphisms
from an affine variety $T$. Indeed, notice that a subset $F\subset T$
is closed if and only if there exists a cover by open sets $T=\cup
V_i$, with $V_i$ 
affine, such that $F\cap V_i$ is closed in $V_i$, for all $i$. Then we
may restrict a pseudo-automorphism $\varphi:T\times X\tor T\times X$
to each $V_i\times X$ and obtain a pseudo-automorphism
$\varphi_i:V_i\times X\tor V_i\times X$, for every $i$. The
assertion follows easily from the previous remark. Clearly, we may also suppose $T$ is
irreducible. 

Unless otherwise explicitly stated, in the sequel we always suppose $T$ is affine and irreducible.

% \begin{lem}\label{lem:topologies}
% Let $X$ be an algebraic variety, and endow $\bir(X)$  with a topology
% $\mathcal T$. Let 
% $Z\subset \bir(X)$ be a locally closed set with respect to $\calt$
% such that $\mathcal T$ induces a structure of algebraic variety on $Z$. Then 
% the $\mathcal T$-topology  of $Z$ is finer than the induced Zariski
% topology, that is, if $\calf\subset \bir(X)$ is a Zariski closed subset, then 
% $\calf\cap Z$ is $\mathcal T$-closed in $Z$.
% \end{lem}

% \begin{proof}
% Let $\varphi: Z\times X\to Z\times X$ be given by
% $\varphi(z,x)=\bigl(z,z(x)\bigr)$. Cleary, $\varphi$ is a
% pseudo-automorphism such that $\varphi^{-1}(\calf)=\calf\cap Z$. Since $\calf$
% is closed in $\bir(X)$ for the Zariski topology, it follows that
% $\calf\cap Z$ is closed in $Z$.
% \end{proof}

\begin{lem}
\label{lem:functoriality}
Let $F:X\dashrightarrow Y$ be a birational map between two algebraic
varieties. Then the map $F^*:\bir(Y)\to \bir(X)$ defined by $F^*(f)= F^{-1}\circ f\circ
F$ is a homeomorphism, with inverse $(F^{-1})^*$.
\end{lem}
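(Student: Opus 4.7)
The plan is to show that $F^*$ and $(F^{-1})^*$ are mutually inverse continuous maps. Since $F^{-1}\circ f\circ F$ is a composition of birational maps $X\tor Y\tor Y\tor X$, it lies in $\bir(X)$, so $F^*$ is well defined as a map of sets; a direct computation then gives $(F^{-1})^*\circ F^*=\mathrm{id}_{\bir(Y)}$ and $F^*\circ (F^{-1})^*=\mathrm{id}_{\bir(X)}$. Thus $F^*$ is a bijection, and by symmetry (swapping the roles of $X$ and $Y$, of $F$ and $F^{-1}$) it suffices to prove that $F^*$ is continuous, i.e., that for every morphism $\psi:T\to\bir(Y)$ the composition $F^*\circ\psi$ is a morphism $T\to\bir(X)$.

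Given the pseudo-automorphism $\Psi:T\times Y\tor T\times Y$ underlying $\psi$, defined on some dense open $V\subset T\times Y$, the natural candidate pseudo-automorphism corresponding to $F^*\circ\psi$ is
\[
\Phi := (\mathrm{id}_T\times F)^{-1}\circ \Psi\circ (\mathrm{id}_T\times F)\colon T\times X\tor T\times X,
\]
which is birational over $T$ and satisfies $\Phi_t = F^{-1}\circ\Psi_t\circ F=F^*(\psi_t)$ on fibers. Letting $U_F\subset X$ and $U_{F^{-1}}\subset Y$ denote the domains of definition of $F$ and $F^{-1}$, the map $\Phi$ is defined and of the required form $(t,x)\mapsto \bigl(t, F^{-1}(\Psi_t(F(x)))\bigr)$ on
\[
U := \bigl\{(t,x)\in T\times U_F : (t,F(x))\in V,\ \Psi(t,F(x))\in T\times U_{F^{-1}}\bigr\}.
\]
This handles conditions (a) and (c) of Definition~\ref{defi_pseudo}: $\Phi$ is given by an explicit morphism $U\to X$ in the $X$-coordinate, and for each $t$ the restriction $\Phi|_{U_t}$ is a birational morphism because it is the composition of the birational morphism $\Psi_t|_{V_t}$ with $F$ and $F^{-1}$.

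The decisive step, and really the only one that requires care, is condition (b): one must verify that $U_t$ is dense in $\{t\}\times X$ for \emph{every} $t\in T$, not merely for $t$ in a dense open of $T$. For this I would argue in three steps: (i) $V_t$ is dense in $\{t\}\times Y$ by the pseudo-automorphism hypothesis on $\Psi$; (ii) since $\Psi_t|_{V_t}$ is a birational morphism, the preimage $V_t\cap \Psi_t^{-1}(\{t\}\times U_{F^{-1}})$ of the dense open $\{t\}\times U_{F^{-1}}$ is again a dense open in $\{t\}\times Y$; and (iii) $F\colon U_F\to Y$ is a dominant morphism, so the preimage of any dense open of $Y$ contains a dense open of $U_F$. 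Combining these, $U_t$ contains a dense open of $\{t\}\times X$, hence is itself dense.

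Granting (b), the map $\Phi$ is a pseudo-automorphism, $F^*\circ \psi$ is a morphism, and $F^*$ is continuous; by the symmetry argument above, $F^*$ is a homeomorphism. The main obstacle is really the uniform-in-$t$ density statement in (b); once that is in hand, the rest of the argument is a formal composition of morphisms.
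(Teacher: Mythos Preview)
Your proposal is correct and follows the same approach as the paper: both reduce to the observation that $\Psi:T\times Y\tor T\times Y$ is a pseudo-automorphism if and only if $(\mathrm{id}\times F^{-1})\circ\Psi\circ(\mathrm{id}\times F):T\times X\tor T\times X$ is one. The paper states this as a one-line remark, whereas you carry out the verification of conditions (a)--(c) of Definition~\ref{defi_pseudo}, with appropriate care given to the uniform-in-$t$ density required by (b).
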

\begin{proof}
The result follows once we observe that  $ \varphi:T\times Y\dashrightarrow
T\times Y$ is a
pseudo-automorphism if and only if  $(\operatorname{id}\times F^{-1})\circ
\varphi\circ   (\operatorname{id}\times F): T\times X\dashrightarrow
T\times X$ is a pseudo automorphism.  
\end{proof}

We consider $\bir(X)\times \bir(Y)\subset \bir(X\times
Y)$ by taking $(f,g)\in\bir(X)\times \bir(Y)$ into the rational map $F:X\times Y\to X\times Y$ defined as $F(x,y)=\bigl(f(x), g(y)\bigr)$. 

\begin{lem}\label{lem1}
Let $X, Y$ be algebraic varieties and $F\in
\bir(X\times Y)$ a birational map; write $F(x,y)=\bigl(F_1(x,y),F_2(x,y)\bigr)$ for $(x,y)\in X\times Y$ in the domain of $F$. Then $F\in \bir(X)\times \bir(Y)\subset \bir(X\times
Y)$  if and only if there exist dense open subsets $U\subset X$, $V\subset Y$ such that $F$ is defined on $U\times V$ and $F_1(x,y)=F_1(x,y')$, 
$F_2(x,y)=F_1(x',y)$ for $x,x'\in U$, $y,y'\in V$,
\end{lem}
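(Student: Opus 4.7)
My plan is to treat the two implications separately, with the reverse one carrying the real content.

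The forward direction is essentially tautological. If $F(x,y)=\bigl(f(x),g(y)\bigr)$ with $f\in\bir(X)$ and $g\in\bir(Y)$, I simply take $U$ and $V$ to be the respective domains of definition of $f$ and $g$. Then $F$ is defined on $U\times V$, the first component $F_1(x,y)=f(x)$ is independent of $y\in V$, and the second $F_2(x,y)=g(y)$ is independent of $x\in U$.

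For the converse, assume the existence of $U$ and $V$ with the two independence properties. Fix any $x_0\in U$ and $y_0\in V$ and define $f\colon U\to X$ by $f(x):=F_1(x,y_0)$ and $g\colon V\to Y$ by $g(y):=F_2(x_0,y)$. The two hypotheses give
$F(x,y)=\bigl(F_1(x,y_0),F_2(x_0,y)\bigr)=\bigl(f(x),g(y)\bigr)$ for every $(x,y)\in U\times V$, so $F$ coincides, as a rational map on $X\times Y$, with the product $f\times g$. The problem thus reduces to showing that $f$ and $g$ actually belong to $\bir(X)$ and $\bir(Y)$. Dominance is immediate: since $F$ is birational, $F(U\times V)=f(U)\times g(V)$ is dense in $X\times Y$, so both $f(U)$ and $g(V)$ are dense, and $f$, $g$ are dominant rational self-maps of the correct dimension.

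To upgrade dominance to birationality I exploit the inverse. Write $F^{-1}(u,v)=\bigl(\phi(u,v),\psi(u,v)\bigr)$; the relation $F\circ F^{-1}=\operatorname{id}$ gives $f\bigl(\phi(u,v)\bigr)=u$ and $g\bigl(\psi(u,v)\bigr)=v$ on a dense open subset of $X\times Y$. For a general $v_0\in Y$ the slice $X\times\{v_0\}$ meets the domain of definition of $F^{-1}$ in a dense open of $X$, so $\phi_{v_0}(u):=\phi(u,v_0)$ is a well-defined dominant rational self-map of $X$ satisfying $f\circ\phi_{v_0}=\operatorname{id}_X$. Multiplicativity of degrees of dominant rational self-maps then yields $\deg(f)\cdot\deg(\phi_{v_0})=\deg(\operatorname{id})=1$, whence $\deg(f)=1$, i.e.\ $f\in\bir(X)$. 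A symmetric argument gives $g\in\bir(Y)$, which concludes the proof. The main obstacle is exactly this last step, because the birationality of the product map $F=f\times g$ does not visibly descend to each factor without some extra input; the right-inverse plus multiplicativity-of-degrees argument above is the cleanest way I see to supply that input.
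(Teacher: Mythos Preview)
Your proof is correct and follows the same route as the paper's: both directions are handled identically up to the point where one has $F=f\times g$ on the dense open $U\times V$. The paper's proof stops there, declaring ``this completes the proof'' and leaving implicit that birationality of $f\times g$ forces $f\in\bir(X)$ and $g\in\bir(Y)$; your right-inverse plus multiplicativity-of-degrees argument spells this step out, so your write-up is in fact more complete than the paper's own proof.
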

\begin{proof}
First suppose there exist $f\in\bir(X)$ and $g\in\bir(Y)$ such that
$F(x,y)=\bigl(f(x),g(y)\bigr)$. Consider nonempty open sets $U\subset X$
and $V\subset Y$ such that $f$ and $g$ are defined on $U$ and $V$
respectively. Hence, $F_1$ and $F_2$ are defined on $U\times V$ and we
have that $F_1(x,y)=f(x)$ and $F_2(x,y)=g(y)$, from which the ``only if
part'' follows. 

Conversely, suppose there exist nonempty open sets $U$ and $V$ as
stated. Then $F_1$ and $F_2$ induce morphisms $f:U\to X$ and $g:V\to
Y$ such that $F(x,y)=\bigl(f(x),g(y)\bigr)$ for $(x,y)\in U\times V$. Since
$U\times V$ is dense in $X\times Y$, this completes the proof.   
  \end{proof}

\begin{pro}\label{pro1.1}
If  $X, Y$ are algebraic (irreducible) varieties, then 
$\bir(X)\times \bir(Y)\subset \bir(X\times Y)$ is a closed subgroup.
\end{pro}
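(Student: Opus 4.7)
The subgroup property is immediate from Lemma \ref{lem1}: composition and inversion in $\bir(X\times Y)$ preserve the product form $(x,y)\mapsto\bigl(f(x),g(y)\bigr)$. The substance of the claim is closedness, which I plan to prove as follows.

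Fix a morphism $\varphi:T\to\bir(X\times Y)$ with $T$ affine and irreducible, corresponding to a pseudo-automorphism $\Phi:T\times X\times Y\dashrightarrow T\times X\times Y$ defined on a dense open $U$ with $U_t$ dense in $\{t\}\times X\times Y$ for every $t$; write $\Phi|_U(t,x,y)=\bigl(t,F_1(t,x,y),F_2(t,x,y)\bigr)$. Applying Lemma \ref{lem1} fiberwise, $\varphi^{-1}\bigl(\bir(X)\times\bir(Y)\bigr)=Z_1\cap Z_2$, where $Z_1$ is the set of $t$ such that $F_1(t,x,y)=F_1(t,x,y')$ as rational maps in $(x,y,y')$ and $Z_2$ is the symmetric condition on $F_2$. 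By symmetry between the two factors it suffices to show $Z_1$ is closed.

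The key construction is an equalizer. Let $p_1,p_2:T\times X\times Y\times Y\to T\times X\times Y$ be the projections $(t,x,y,y')\mapsto(t,x,y)$ and $(t,x,y,y')\mapsto(t,x,y')$, and set $U'=p_1^{-1}(U)\cap p_2^{-1}(U)$; a short verification (using that $X\times Y\setminus U_t$ has dimension strictly smaller than $\dim X+\dim Y$) shows $U'_t$ is dense in $\{t\}\times X\times Y\times Y$ for every $t$. The two morphisms $\alpha,\beta:U'\to X$ given by $\alpha(t,x,y,y')=F_1(t,x,y)$ and $\beta(t,x,y,y')=F_1(t,x,y')$ are both defined on $U'$, so the equalizer $E_1=(\alpha,\beta)^{-1}(\Delta_X)$ is closed in $U'$; since the fibers $U'_t$ are irreducible, $t\in Z_1$ if and only if $U'_t\subset E_1$.

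To turn this into a closed condition on $t$ I would pass to compactifications $X\hookrightarrow\bar X$, $Y\hookrightarrow\bar Y$ and let $\bar E_1$ denote the closure of $E_1$ in $W=T\times\bar X\times\bar Y\times\bar Y$. The projection $\pi:W\to T$ is proper, so by upper semicontinuity of fiber dimension the set $Z_1'=\{t:\dim(\bar E_1\cap\pi^{-1}(t))\ge\dim X+2\dim Y\}$ is closed in $T$. The main and most delicate step is the identification $Z_1'=Z_1$: in one direction, if $t\in Z_1$ then $U'_t\subset E_1\subset\bar E_1$ and taking closures forces $\{t\}\times\bar X\times\bar Y\times\bar Y\subset\bar E_1$; conversely, a closed subset of $\{t\}\times\bar X\times\bar Y\times\bar Y$ of maximal dimension must equal this ambient (which is irreducible), so intersecting with the open $U'\subset W$ while using the identity $\bar E_1\cap U'=E_1$ (valid because $E_1$ is already closed in $U'$) yields $U'_t\subset E_1$, hence $t\in Z_1$. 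The analogous argument for $Z_2$ then completes the proof; the compactification step is needed because upper semicontinuity of fiber dimension on the base requires properness of the projection, and this is precisely the technical obstacle to overcome.
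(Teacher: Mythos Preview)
Your argument is correct, but it takes a substantially heavier route than the paper's. The paper first reduces (via Lemma~\ref{lem:functoriality}) to the case $X\subset\A^n$, $Y\subset\A^m$ affine, so that the pseudo-automorphism is given by global rational functions $f_1,\dots,f_n,g_1,\dots,g_m\in\Bbbk(T\times X\times Y)$. It then lets $A=\varphi^{-1}\bigl(\bir(X)\times\bir(Y)\bigr)$ and, after passing to an irreducible component of $\overline{A}$, assumes $A$ is dense in $T$. The key trick is to pick a single point $y_0\in Y$ where $f=f_i$ restricts to a rational function on $T\times X\times\{y_0\}$ and to compare $f$ with $f\circ p$, where $p(t,x,y)=(t,x,y_0)$: these two rational functions agree on the dense subset $A\times X\times Y$, hence agree identically, so $f$ is independent of $y$. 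This avoids any compactification or fiber-dimension argument.

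Your approach trades that elementary density argument for geometry: you build the equalizer $E_1\subset U'$, compactify, and invoke upper semicontinuity of fiber dimension for the proper projection $\bar E_1\to T$. This is valid (the identity $\bar E_1\cap U'=E_1$ holds because $E_1$ is closed in the open subset $U'\subset W$, and full-dimensional closed subsets of the irreducible fiber $\{t\}\times\bar X\times\bar Y\times\bar Y$ must equal the whole fiber), but it requires Nagata compactification and Chevalley-type semicontinuity, whereas the paper uses nothing beyond the fact that two rational functions agreeing on a dense set are equal. The payoff of your method is that it works intrinsically without choosing affine coordinates or a basepoint $y_0$; the cost is the extra machinery.
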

\begin{proof}
In view of Lemma \ref{lem:functoriality}, we can assume that $X\subset\A^n,Y\subset\A^m$ are
affine  varieties. Let $\varphi:T\times X\times Y\dashrightarrow
T\times X\times Y$ be a pseudo-automorphism (over $T$). Then 
\[\varphi(t,x , y)=\bigl(t,f_1(t,x,y),\dots,
f_n(t,x,y),g_1(t,x,y),\dots, g_m(t,x,y)\bigr),\] 
where $f_i,g_j\in \Bbbk(T\times X\times Y)$ are rational functions on $T\times X\times Y$ (of course, $f_i,g_j$ verify additional conditions). 

Let $A:=\varphi^{-1}\bigl(\bir(X)\times \bir(Y)\bigr)$ and denote by
$\overline{A}$ the closure of $A$ in $T$. Following Lemma \ref{lem1}
it suffices to prove that the restrictions of the $f_i'$s (resp.~the
$g_j'$s) to $\overline{A}\times X\times Y$ do not depend on $y$
(resp.~on $x$), which implies $A=\overline{A}$.  

Up to restrict $\varphi$ to each irreducible component of
$\overline{A}$ we may suppose that $A$ is dense in $T$. By
symmetry we only consider the case relative to the $f_i'$s and write
$f=f_i$ for such a rational function. 

Since the  poles of $f$ are contained in a proper subvariety of
$T\times X\times Y$, we deduce that there exists $y_0\in Y$ such that the
restriction of $f$ to $T\times X\times \{y_0\}$ induces a rational
function on this subvariety. If $p:T\times X\times Y\to T\times
X\times \{y_0\}$ denotes the morphism $(t,x,y)\mapsto (t,x,y_0)$ we
conclude $f\circ p$ is a rational function on  $T\times X\times Y$.  

Our assumption implies $f$ coincides with  $f\circ p$ along $A\times
X\times Y,$ 
 which is dense in $T\times X\times Y$, so $f=f\circ p$  and the
 result follows. 
\end{proof}

\begin{rem}\label{rem5}
Two pseudo-automorphisms
$\varphi:T\times X\tor T\times X$ and $\psi:T\times Y\tor T\times Y$
induce a morphism $(\varphi,\psi): T\to 
\bir(X)\times\bir(Y)$, that is, an algebraic family in 
$\bir(X)\times\bir(Y)$. As in the proof of Proposition \ref{pro1.1},
it follows from Lemma \ref{lem1} that $\mathcal F\subset
\bir(X)\times\bir(Y)$ is closed if and only if
$(\varphi,\psi)^{-1}(\mathcal F)  $  is closed for every pair
$\varphi,\psi$.  
Moreover, it is easy to prove that the topology on $\bir(X)\times\bir(Y)$ induced
by the Zariski topology of $\bir(X\times Y)$ is the \emph{finest} topology
for which all the morphisms  $(\varphi,\psi)$ are
continuous. 

Observe that  the Zariski topology of
$\bir(X)\times\bir(Y)$ is finer than the product topology of the
Zariski topologies of its factors, as it is the case for algebraic varieties. 
\end{rem}

\begin{pro}\label{pro2.1}
If $\varphi,\psi:T\to \bir(X)$ are morphisms, then $t\mapsto
\varphi_t\circ \psi_t$ defines an algebraic family in
$\bir(X)$. Moreover, the product homomorphism $\bir(X)\times
\bir(X)\to \bir(X)$ and the inversion map $\bir(X)\to\bir(X)$ are
continuous. 
\end{pro}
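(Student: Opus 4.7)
The plan is to first show that composition of two morphisms $\varphi,\psi:T\to\bir(X)$ is again a morphism, and then deduce the continuity of multiplication and inversion from this. The deduction will be a formal consequence of the definition of the Zariski topology on $\bir(X)$, of the characterization of the topology on $\bir(X)\times\bir(X)$ given in Remark \ref{rem5}, and of the observation recorded just before Lemma \ref{lem:functoriality} that inversion sends morphisms to morphisms.

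To prove that $t\mapsto \varphi_t\circ\psi_t$ defines an algebraic family, I would take the pseudo-automorphisms $\Phi,\Psi:T\times X\tor T\times X$ associated with $\varphi$ and $\psi$, having dense open sets of definition $U_\Phi, U_\Psi\subset T\times X$ respectively, and form the rational self-map $\Phi\circ\Psi$ of $T\times X$ over $T$. On the open set $U:=U_\Psi\cap \Psi^{-1}(U_\Phi)$ this composition is a morphism of the form $(t,x)\mapsto (t,h(t,x))$, so conditions (a) and (c) of Definition \ref{defi_pseudo} are immediate; only condition (b) requires argument. Fix $t\in T$: by hypothesis $(U_\Phi)_t$ and $(U_\Psi)_t$ are open and dense in $\{t\}\times X$, and $\Psi|_{(U_\Psi)_t}:(U_\Psi)_t\to\{t\}\times X$ is a birational morphism, in particular dominant. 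Hence its image meets the dense open set $(U_\Phi)_t$, so $\bigl(\Psi|_{(U_\Psi)_t}\bigr)^{-1}\bigl((U_\Phi)_t\bigr)$ is nonempty and open in the irreducible variety $(U_\Psi)_t$, and therefore dense. This shows that $U_t$ is open and dense in $\{t\}\times X$, so $\Phi\circ\Psi$ is a pseudo-automorphism whose associated family is $t\mapsto\varphi_t\circ\psi_t$.

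With the first assertion established, continuity of the product map $m:\bir(X)\times\bir(X)\to\bir(X)$ is formal. By Remark \ref{rem5}, a subset of $\bir(X)\times\bir(X)$ is closed if and only if its preimage under every pair $(\varphi,\psi)$ is closed in $T$. Applied to $m^{-1}(\calf)$ for $\calf\subset\bir(X)$ closed, the relevant preimage is $\{t\in T:\varphi_t\circ\psi_t\in\calf\}$, and this is closed in $T$ by the first assertion together with the definition of the Zariski topology on $\bir(X)$. Continuity of inversion is even easier: composing a morphism $\varphi:T\to\bir(X)$ with inversion gives the morphism $t\mapsto\varphi_t^{-1}$, so the preimage of any closed subset of $\bir(X)$ under inversion is closed in $T$. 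The one genuine obstacle in the whole proposition is the verification of condition (b) for $\Phi\circ\Psi$, where one exploits the \emph{birationality}---not merely the dominance---of the fiber maps of $\Psi$ in order to pull back a dense open set to a dense open set.
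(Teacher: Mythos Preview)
Your proof is correct and follows the same approach as the paper's, with the welcome addition of actually verifying that $\varphi\circ\psi$ is a pseudo-automorphism (the paper simply asserts this). One small correction to your closing remark: dominance of the fiber maps $\Psi|_{(U_\Psi)_t}$ already suffices for the density check in condition~(b), since the preimage of a nonempty open set under a dominant morphism of irreducible varieties is nonempty open, hence dense---birationality is not needed at that step.
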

\begin{proof}
To prove the first assertion it suffices to note that the family
$t\mapsto \varphi_t\circ \psi_t$ corresponds to the
pseudo-automorphism $\varphi\circ \psi: T\times X\tor T\times
X$. Applying Remark \ref{rem5},
the first part of the second assertion follows. Indeed, if $\mathcal F\subset
\bir(X)$ is a closed subset, then
$(\varphi,\psi)^{-1}\bigl(m^{-1}(\mathcal F)\bigr)=(\varphi\circ \psi)^{-1}(\mathcal F)$. For  the rest of the
proof it suffices to note that for a family $\psi$ as above  the
map $t\mapsto \psi_t^{-1}$ defines an algebraic family.  
\end{proof}

\begin{lem}\label{lem2.3}
The Zariski topology on $\bir(X)$ is T1. In particular, if
$\varphi,\psi:T\to \bir(X)$ are two morphisms, then the subset
$\bigl\{t\in T; \varphi(t)=\psi(t)\bigr\}$ is closed. 
\end{lem}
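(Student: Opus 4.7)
The plan is to reduce both statements of the lemma to a single key claim: for every morphism $\chi:T\to \bir(X)$, the set $E(\chi):=\{t\in T:\chi_t=\operatorname{id}_X\}$ is closed in $T$. Granted this, the ``in particular'' statement follows immediately, because for any two morphisms $\varphi,\psi:T\to\bir(X)$, Proposition \ref{pro2.1} guarantees that $\chi_t:=\psi_t^{-1}\circ\varphi_t$ is again a morphism, and the equality $\varphi_t=\psi_t$ is equivalent to $\chi_t=\operatorname{id}_X$. To deduce T1, note that for any $g\in\bir(X)$ the constant map $T\to\bir(X)$, $t\mapsto g$, is a morphism (it corresponds to the pseudo-automorphism $(t,x)\mapsto(t,g(x))$); applying the displayed equality with this $\psi$ shows that $\varphi^{-1}(g)$ is closed for every morphism $\varphi$, so $\{g\}$ is closed by the very definition of the Zariski topology on $\bir(X)$.

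To establish the key claim, let $\chi:T\times X\dashrightarrow T\times X$ be the pseudo-automorphism associated to $\chi$, with domain $U\subset T\times X$ and $\chi|_U(t,x)=(t,f(t,x))$ for some regular map $f:U\to X$. Consider the regular morphism $(f,\operatorname{pr}_X):U\to X\times X$ sending $(t,x)$ to $(f(t,x),x)$, and let $Z\subset U$ be the preimage of the diagonal $\Delta_X\subset X\times X$. Since $X$ is separated, $\Delta_X$ is closed, so $Z$ is closed in $U$ and $C:=U\setminus Z$ is open in $U$, hence open in $T\times X$. Let $\operatorname{pr}_T:T\times X\to T$ be the first projection. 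I claim
\[
E(\chi)=T\setminus \operatorname{pr}_T(C).
\]
Since the projection $\operatorname{pr}_T$ is an open map (a standard computation on a basic open of an affine chart $T'\times X'$: if $h=\sum_i g_i(t)h_i(x)$ then $\operatorname{pr}_T\bigl(D(h)\bigr)=\bigcup_i D(g_i)$), this will show that $E(\chi)$ is the complement of an open set in $T$, hence closed.

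The nontrivial point in verifying the displayed equality is that it uses property (b) of Definition \ref{defi_pseudo}: the fiber $U_t=U\cap(\{t\}\times X)$ is dense in $\{t\}\times X$ for every $t\in T$. If $\chi_t=\operatorname{id}_X$, then $f$ restricted to $U_t$ agrees with the projection to $X$ on a dense open subset of $U_t$; since $U_t$ is closed in $U$, the set $Z\cap U_t$ is closed in $U_t$ and contains this dense open, so $Z\cap U_t=U_t$, giving $C\cap U_t=\emptyset$ and $t\notin\operatorname{pr}_T(C)$. Conversely, if $t\notin\operatorname{pr}_T(C)$, then $U_t\subseteq Z$, so $f(t,x)=x$ on the dense open $U_t$ of $\{t\}\times X$, forcing $\chi_t=\operatorname{id}_X$ as rational maps $X\dashrightarrow X$.

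The only minor obstacle is remembering to invoke separatedness (to make $Z$ closed) and openness of the projection (to make $\operatorname{pr}_T(C)$ open); both are routine, and the whole argument hinges on the density property (b) of a pseudo-automorphism, without which a ``locally identical'' $\chi_t$ could fail to be the identity rational map.
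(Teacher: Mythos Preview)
Your proof is correct, and the reduction of both assertions to the single key claim $E(\chi)=\{t:\chi_t=\operatorname{id}\}$ closed is exactly how the paper organizes things as well. The difference lies in how the key claim is established. The paper embeds $X$ into some $\PP^m$, writes $\varphi_t=(f_0(t,\x):\cdots:f_m(t,\x))$ with $f_i\in\Bbbk[T][x_0,\dots,x_m]$, observes that $E(\chi)$ is the set of $t$ for which $x_jf_i(t,\x)-x_if_j(t,\x)$ vanishes identically on $X$, and then invokes the \emph{closedness} of the projection $T\times X\to T$ (i.e.\ properness of $X$) to conclude. You, by contrast, avoid coordinates and any projective embedding: you pull back the diagonal $\Delta_X$ (using only separatedness of $X$), take its complement $C$ in the domain $U$, and use \emph{openness} of the projection $T\times X\to T$ to see that $E(\chi)=T\setminus\operatorname{pr}_T(C)$ is closed. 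Your argument is slightly more general (separated rather than projective suffices) and more intrinsic; the paper's argument is more concrete and makes the defining equations of $E(\chi)$ explicit. Both rely in the same essential way on condition~(b) of Definition~\ref{defi_pseudo} to pass between ``$\chi_t=\operatorname{id}$ as a rational map'' and ``$f(t,\cdot)=\operatorname{id}$ on all of $U_t$''.

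One small remark on your parenthetical about openness of $\operatorname{pr}_T$: the identity $\operatorname{pr}_T\bigl(D(h)\bigr)=\bigcup_iD(g_i)$ for $h=\sum_ig_i\otimes h_i$ is only literally correct when the $h_i$ are linearly independent in $\Bbbk[X']$; you might just cite that projections of varieties over a field are universally open (or that flat morphisms of finite type are open) and leave it at that.
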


\begin{proof}
It suffices to show that $id\in\bir(X)$ is a closed point. Without
loss of generality we may suppose $X\subset \PP^m$ is a projective
variety. Then a morphism $\varphi:T\to\bir(X)$ may be
represented as 
\[
\varphi_t=\bigl(f_0(t,x):\cdots:f_m(t,x)\bigr), (t,x)\in U,
\]
where $U$ is as in Definition \ref{defi_pseudo} and $f_i\in \Bbbk[T][x_0,\ldots,x_m]$, $i=0,\ldots,m$, are
homogeneous of same degree in the variables
$x_0,\ldots,x_m$. Therefore 
\[
\begin{split}
\bigl\{t\in T; \varphi(t)=id\bigr\} & =
\bigcap_{i,j=0}^m\bigl\{t\in T \mathrel{:} x_jf_i(t,x)-x_if_j(t,x)=0,
\ \forall (t,x)\in U_t\bigr\}\\
& =
\bigcap_{i,j=0}^m\bigl\{t\in T \mathrel{:} x_jf_i(t,x)-x_if_j(t,x)=0,
\ \forall x\in X\bigr\}\\
& = \bigcap_{i,j=0, x\in X}^m\bigl\{t\in T \mathrel{:}
x_jf_i(t,x)-x_if_j(t,x)=0 \bigr\}.
\end{split}
\] 

Since  for all $i,j$ the equations 
\[x_jf_i(t,x)-x_if_j(t,x)=h_1(x)=\cdots=h_\ell(x)=0\]
define a closed set in $T\times X$, and $X$ is projective we deduce
$\bigl\{t\in T \mathrel{:} \varphi(t)=id\bigr\}$ is closed in $T$. 
\end{proof}

\begin{cor}
Let $\psi:Y\to \bir(X)$ be a morphism, where $Y$ is a projective variety. Then $\psi(Y)$ is closed. 
\end{cor}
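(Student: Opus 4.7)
The plan is to verify from the definition that the preimage $\varphi^{-1}\bigl(\psi(Y)\bigr)$ is closed in $T$ for every morphism $\varphi: T\to \bir(X)$ with $T$ affine and irreducible; by definition of the Zariski topology, this will imply that $\psi(Y)$ is closed in $\bir(X)$.

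First I would build two auxiliary morphisms $\Phi,\Psi: T\times Y\to\bir(X)$ by setting $\Phi(t,y):=\varphi(t)$ and $\Psi(t,y):=\psi(y)$. Each one arises from a genuine pseudo-automorphism over $T\times Y$: namely $\Phi$ is obtained by pulling back the pseudo-automorphism of $\varphi$ along the projection $T\times Y\to T$, giving $(t,y,x)\mapsto \bigl(t,y,\varphi_t(x)\bigr)$, and $\Psi$ is the analogous pullback of $\psi$ along $T\times Y\to Y$. Next I would cover $T\times Y$ by affine open sets of the form $T\times V_i$, where $Y=\bigcup V_i$ is an affine cover, and apply Lemma \ref{lem2.3} on each piece to conclude that the equality locus
\[
Z:=\bigl\{(t,y)\in T\times Y: \varphi(t)=\psi(y)\bigr\}
\]
is closed in $T\times Y$.

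Since $Y$ is projective (hence complete), the second projection $\pi: T\times Y\to T$ is a closed morphism of algebraic varieties, so $\pi(Z)$ is closed in $T$. But by construction
\[
\pi(Z)=\bigl\{t\in T: \exists\, y\in Y \te{ with } \varphi(t)=\psi(y)\bigr\}=\varphi^{-1}\bigl(\psi(Y)\bigr),
\]
which gives the desired closedness in $T$, and therefore in $\bir(X)$.

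The essential ingredients are Lemma \ref{lem2.3} (which says that the equality of two morphisms to $\bir(X)$ defines a closed subset of the source) together with the completeness of $Y$. The only mild obstacle is that Lemma \ref{lem2.3} was stated under the standing assumption that the source is affine and irreducible, so one must patch the argument along the affine cover $\{T\times V_i\}$ of $T\times Y$; no new idea is required beyond this bookkeeping, and the statement then follows directly.
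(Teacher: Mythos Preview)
Your proposal is correct and follows essentially the same approach as the paper. The only cosmetic difference is that the paper packages $\varphi$ and $\psi$ into a single morphism $\phi:T\times Y\to\bir(X)$, $(t,y)\mapsto \varphi(t)\circ\psi(y)^{-1}$, and takes $\phi^{-1}\bigl(\{id\}\bigr)$ (using the T1 part of Lemma~\ref{lem2.3}), whereas you keep $\Phi$ and $\Psi$ separate and invoke the equality-locus part of the same lemma; both arguments then finish by projecting to $T$ using that $Y$ is complete.
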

\begin{proof}
A morphism $\varphi:T\to \bir(X)$ induces a morphism $\phi: T\times
Y\to \bir(X)$ defined by $(t,y)\mapsto \varphi(t)\circ\psi(y)^{-1}.$
Then $\phi^{-1}\bigl(\{id\}\bigr)=\bigl\{(t,y); \varphi(t)=\psi(y)\bigr\}$ is closed in
$T\times Y$. The projection of this set onto the first factor is
exactly $\varphi^{-1}\bigl(\psi(Y)\bigr)$ which is closed.     
\end{proof}

\begin{cor}\label{cor2.6}
The centralizer of an element $f\in\bir(X)$ is closed. In particular,
the centralizer $C_{\bir(X)}(G)$ of a subgroup $G\subset \bir(X)$ is
closed. \hfill $\qed$
\end{cor}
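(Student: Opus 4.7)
The plan is to reduce the claim to the T1 property of Lemma \ref{lem2.3} combined with the continuity of composition established in Proposition \ref{pro2.1}. Fix $f\in\bir(X)$ and let $\varphi:T\to\bir(X)$ be an arbitrary morphism; we must show that $\varphi^{-1}\bigl(C_{\bir(X)}(f)\bigr)=\bigl\{t\in T\mathrel{:}\varphi(t)\circ f=f\circ\varphi(t)\bigr\}$ is closed in $T$.

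The key observation is that the constant map $c_f:T\to\bir(X)$, $t\mapsto f$, is a morphism: it corresponds to the pseudo-automorphism $(t,x)\mapsto\bigl(t,f(x)\bigr)$, which trivially satisfies Definition \ref{defi_pseudo}. By Proposition \ref{pro2.1}, the two maps
\[
\varphi_1(t):=\varphi(t)\circ f,\qquad \varphi_2(t):=f\circ\varphi(t)
\]
are therefore both morphisms from $T$ to $\bir(X)$. Applying Lemma \ref{lem2.3} to $\varphi_1$ and $\varphi_2$, the set $\bigl\{t\in T\mathrel{:}\varphi_1(t)=\varphi_2(t)\bigr\}$ is closed in $T$; but this set is precisely $\varphi^{-1}\bigl(C_{\bir(X)}(f)\bigr)$. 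Since $\varphi$ was arbitrary, $C_{\bir(X)}(f)$ is closed in $\bir(X)$.

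For the second assertion, one simply writes
\[
C_{\bir(X)}(G)=\bigcap_{f\in G}C_{\bir(X)}(f),
\]
which is an intersection of closed subsets and hence closed. There is no real obstacle here; the only point worth checking carefully is that the constant map $c_f$ is genuinely a morphism in the sense of Definition \ref{defi_pseudo}, so that Proposition \ref{pro2.1} applies to produce $\varphi_1$ and $\varphi_2$ as morphisms.
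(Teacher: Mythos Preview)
Your proof is correct and follows essentially the same idea as the paper's: both rest on Lemma \ref{lem2.3} (the T1/equalizer property) together with Proposition \ref{pro2.1}. The paper packages the argument slightly differently, observing that the commutator map $c_f:\bir(X)\to\bir(X)$, $h\mapsto hfh^{-1}f^{-1}$, is continuous and then taking $c_f^{-1}(\{id\})$; your version works directly at the level of morphisms from $T$ and invokes the equalizer form of Lemma \ref{lem2.3}, which has the minor advantage of not needing inversion.
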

\begin{proof}
Since the commutator map $c_f:\bir(X)\to \bir(X)$,
$c_f(h)=hfh^{-1}f^{-1}$, is continuous, $c_f^{-1}\bigl(\{id\}\bigr)$ is closed. 
\end{proof}

Another consequence of Lemma \ref{lem2.3} (and Remark \ref{rem5}) is
that for an arbitrary topological subspace $A\subset\bir(X)$ and a
point $f\in\bir(X)$, the natural identification map $\{f\}\times A\to
A$ is an homeomorphism. As in \cite[Chap.I, Thm. 3]{Sha} we obtain: 

\begin{cor}\label{cor2.7}
If $A,B\subset\bir(X)$ are irreducible subspaces, then $A\times B$ is
an irreducible subspace of $\bir(X)\times\bir(X)$.
\end{cor}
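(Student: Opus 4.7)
The plan is to follow the classical proof that a product of irreducible topological spaces (in the Shafarevich/Zariski sense used for varieties) is irreducible, adapted so that all slice maps are controlled by the slice homeomorphism observation made just before the statement.

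First, I would suppose that $A\times B = Z_1\cup Z_2$ with each $Z_i$ closed in $A\times B$, and my goal is to show $A\times B = Z_1$ or $A\times B = Z_2$. For each fixed $a\in A$, the slice $\{a\}\times B$ is, by the remark preceding Corollary~\ref{cor2.7} (itself a consequence of Lemma~\ref{lem2.3} and Remark~\ref{rem5}), homeomorphic to $B$ and hence irreducible. Since
\[
\{a\}\times B = \bigl(Z_1\cap(\{a\}\times B)\bigr)\cup \bigl(Z_2\cap(\{a\}\times B)\bigr)
\]
is a decomposition into closed subsets, irreducibility gives $\{a\}\times B\subset Z_1$ or $\{a\}\times B\subset Z_2$.

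Next, set $A_i = \{a\in A\mathrel{:} \{a\}\times B\subset Z_i\}$ for $i=1,2$, so that $A=A_1\cup A_2$. The main technical point is to verify that each $A_i$ is closed in $A$. For this I would write
\[
A_i=\bigcap_{b\in B}\bigl\{a\in A\mathrel{:} (a,b)\in Z_i\bigr\};
\]
each set on the right is the preimage of the closed set $Z_i$ under the slice embedding $A\to A\times\{b\}\subset A\times B$, which is a homeomorphism onto its image by the same slice observation. Hence each of these sets is closed in $A$, and so is the intersection $A_i$. Using the irreducibility of $A$, we conclude that $A=A_1$ or $A=A_2$, which yields $A\times B = Z_1$ or $A\times B = Z_2$, respectively.

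The main potential obstacle is purely foundational: making sure that the \emph{subspace} topology on $A\times B\subset \bir(X)\times\bir(X)$ really does make the slice maps $b\mapsto(a,b)$ (for fixed $a$) and $a\mapsto(a,b)$ (for fixed $b$) into homeomorphisms onto their images. This is exactly what the remark preceding the statement provides, via Lemma~\ref{lem2.3} and Remark~\ref{rem5}; once this is granted, the rest of the proof is the Shafarevich argument transplanted verbatim, as cited in \cite[Chap.\,I, Thm.~3]{Sha}.
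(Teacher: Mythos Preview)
Your proposal is correct and follows exactly the approach indicated by the paper: the authors do not spell out a proof but simply refer to \cite[Chap.~I, Thm.~3]{Sha}, relying on the slice-homeomorphism observation derived from Lemma~\ref{lem2.3} and Remark~\ref{rem5}; your write-up is precisely that classical argument made explicit.
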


\begin{pro}\label{pro2.8}
The irreducible components of $\bir(X)$ do not intersect. Moreover,
$\bir(X)^0$, 
the  unique irreducible component of $\bir(X)$ which contains
$id$,  is a
normal (closed) subgroup.   
\end{pro}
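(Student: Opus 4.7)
The plan is to mimic the classical argument for the identity component of an algebraic group, using Corollary \ref{cor2.7} (irreducibility of products of irreducible subspaces) and Proposition \ref{pro2.1} (continuity of the product and inversion) as the main tools.

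First I would establish uniqueness of the irreducible component containing $id$. If $C_1,C_2$ are two such components, then by Corollary \ref{cor2.7} the product $C_1\times C_2$ is irreducible in $\bir(X)\times\bir(X)$. The continuous multiplication map $m:\bir(X)\times\bir(X)\to\bir(X)$ (Proposition \ref{pro2.1}) sends it to an irreducible subset whose closure contains both $C_1=C_1\cdot\{id\}$ and $C_2=\{id\}\cdot C_2$; by maximality of irreducible components, $C_1=C_2$. Denote this component by $\bir(X)^0$; irreducible components are always closed (closure of irreducible is irreducible), and this is consistent with the $T_1$ property established in Lemma \ref{lem2.3}. Running the same product argument with $\bir(X)^0$ in both slots shows that $\overline{m(\bir(X)^0\times\bir(X)^0)}$ is an irreducible set containing $id$, hence contained in $\bir(X)^0$, so $\bir(X)^0$ is closed under multiplication. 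Continuity of inversion, combined with the fact that the image of $\bir(X)^0$ under inversion is irreducible and contains $id$, yields stability under inversion. Hence $\bir(X)^0$ is a closed subgroup.

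Finally, for any $f\in\bir(X)$ the left-translation $L_f(g)=f\circ g$ is a homeomorphism: it is continuous by Proposition \ref{pro2.1} applied to the constant family at $f$ (which is a morphism, corresponding to the pseudo-automorphism $(t,x)\mapsto(t,f(x))$) and an arbitrary $\psi:T\to\bir(X)$, with continuous inverse $L_{f^{-1}}$. Hence $f\cdot\bir(X)^0$ is again an irreducible component and is the unique one containing $f$, so every irreducible component of $\bir(X)$ is a left coset of $\bir(X)^0$; since cosets of a subgroup are pairwise equal or disjoint, the irreducible components of $\bir(X)$ do not intersect. The same homeomorphism argument applied to inner automorphism $g\mapsto fgf^{-1}$, which fixes $id$, shows $f\bir(X)^0 f^{-1}=\bir(X)^0$, establishing normality. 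The only real subtlety is that the continuity of $m$ is understood with respect to the Zariski topology on $\bir(X)\times\bir(X)$, which is strictly finer than the product topology (cf.~Remark \ref{rem5}); Corollary \ref{cor2.7} is precisely what guarantees that irreducibility survives in that finer topology, so the classical algebraic-group argument goes through without modification.
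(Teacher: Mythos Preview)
Your proof is correct and follows essentially the same route as the paper: the authors use Corollary~\ref{cor2.7} and the product map exactly as you do to establish uniqueness of $\bir(X)^0$, and then simply cite \cite[Chapter~3, Thm.~3.8]{FR} for the remaining assertions (closure under product and inversion, normality, and disjointness of components via coset translation), which is precisely the classical algebraic-group argument you have spelled out in full.
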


\begin{proof}
Let $A,B$ be  irreducible components  containing $id$.  Corollary
\ref{cor2.7} implies $A\cdot B$ is irreducible. Since $id\in A\cap
B$ then $A \cup B \subset A\cdot B$ from which it follows $A=A\cdot B=B$.  This proves the
uniqueness of $\bir(X)^0$.  

The rest of the proof works as in \cite[Chapter 3, Thm. 3.8]{FR}.     
\end{proof}

We have also the following easy result:

\begin{pro}\label{pro2.9}
Let $H\subset \bir(X)$ be a subgroup. 

(a) The closure $\overline{H}$ of $H$ is a subgroup. Moreover, if $H$ is normal, then $\overline{H}$ is normal. 

(b) If $H$ contains a dense open set, then $H=\overline{H}$.
\end{pro}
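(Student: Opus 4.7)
The plan is to treat $\bir(X)$ as carrying enough topological-group structure to run the usual closed-subgroup argument. From Proposition~\ref{pro2.1} (continuity of product and inversion) together with the observation preceding Corollary~\ref{cor2.7} (the natural identification $\{f\}\times \bir(X) \to \bir(X)$ is a homeomorphism), I can conclude that for every $g \in \bir(X)$ the left translation $\lambda_g$, the right translation $\rho_g$, the inversion $\iota$ and the conjugation $c_g$ are self-homeomorphisms of $\bir(X)$ (each is continuous and has an obvious continuous inverse). This is really the only input I will need.

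For (a) I would argue in three steps. First, closure under inversion: since $\iota$ is a homeomorphism with $\iota(H)=H$, one has $\iota(\overline{H}) = \overline{\iota(H)} = \overline{H}$. Second, closure under the product, in two substeps. For fixed $g \in H$, $\rho_g(\overline{H}) \subseteq \overline{\rho_g(H)} = \overline{H}$, so $\overline{H}\cdot H \subset \overline{H}$. Then for fixed $f \in \overline{H}$, the first substep gives $\lambda_f(H) \subset \overline{H}$, hence $\lambda_f(\overline{H}) \subseteq \overline{\lambda_f(H)} \subseteq \overline{\overline{H}} = \overline{H}$, i.e.\ $\overline{H}\cdot\overline{H} \subset \overline{H}$. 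Third, if $H$ is normal then $c_g(H) = H$ for every $g$, so as above $c_g(\overline{H}) = \overline{H}$; hence $\overline{H}$ is normal.

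For (b) the strategy is the classical one: I would show that $H$ is open, which then forces it to be closed. Let $U \subset H$ be the given dense open set and fix any $u_0 \in U$. For each $h \in H$, the set $hu_0^{-1}\,U = \lambda_{hu_0^{-1}}(U)$ is open in $\bir(X)$, contains $h$ (take $u=u_0$), and lies inside $H$ because $hu_0^{-1} \in H$ and $U \subset H$. Hence $H$ is a neighbourhood of each of its points, so $H$ is open. Finally $\bir(X)\setminus H$ is the disjoint union of the non-trivial cosets $gH = \lambda_g(H)$, each of which is open; so $\bir(X) \setminus H$ is open, $H$ is closed, and $H = \overline{H}$.

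No step looks genuinely hard. The one subtlety is that the Zariski topology on $\bir(X)$ is exotic (in particular, Remark~\ref{rem5} warns that the Zariski topology on $\bir(X)\times\bir(X)$ is strictly finer than the product topology, and no countability axiom is available), so arguments leaning on sequences, nets, or separation axioms should be avoided. The plan above uses only continuity of translations, inversion and conjugation, together with the formal inclusion $f(\overline{A}) \subseteq \overline{f(A)}$ for continuous $f$; both inputs are already in place in the paper.
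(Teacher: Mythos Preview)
Your proposal is correct and is precisely the standard argument the paper has in mind: the paper's proof simply says ``follows the same arguments that the analogous case for algebraic groups (see \cite[Chapter 3, Section 3]{FR})'' and illustrates with the same observation you use, namely that $g\mapsto fgf^{-1}$ is a homeomorphism so $f\overline{H}f^{-1}=\overline{fHf^{-1}}$. Your write-up just unpacks that reference in full, with the same translation/inversion/conjugation homeomorphism inputs drawn from Proposition~\ref{pro2.1} and the remark before Corollary~\ref{cor2.7}.
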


\begin{proof}
The proof of this result follows the same arguments that the analogous
case for algebraic groups (see  \cite[Chapter
3, Section 3]{FR}). For example, in order to prove the second part of (a) it
suffices to note that since $g\mapsto fgf^{-1}$ is an homeomorphism, then
$f\overline{H}f^{-1}=\overline{fHf^{-1}}$. 
\end{proof}

% We would like to have a version of Chevalley's results on constructibility as follows:

% \begin{con}\label{con2.1}
% Let $\varphi:T\to\bir(X)$ be a morphism and let $C\subset T$ be a constructible set. Then $\varphi(C)$ is constructible and contains a dense open set of $\overline{\varphi(C)}$. 
% \end{con}  

% The conjecture, if true, motivates the following:

% \begin{defi}
% A closed set $\calf\subset\bir(X)$ is said to be a (closed) subvariety if there exists a morphism $f:T\to\bir(X)$ such that $\calf=\overline{f(T)}$. 
% \end{defi}

% In Theorem \ref{thm:chevalley} we prove the conjecture in the case where $X$ is a rational variety.

\section{The Cremona group}\label{sec3}

In this section we consider the case $X=\PP^n$; we  fix homogeneous coordinates
$x_0,\ldots,x_n$ in $\PP^n$. As in the introduction, 
if $f:\PP^n\tor\PP^n$ is a birational 
map, the \emph{degree} of $f$ is the minimal degree $\deg(f)$ of
homogeneous polynomials in $\Bbbk[x_0,\ldots,x_n]$ defining $f$.  

\subsection{Connectedness and simplicity}\ %

In \cite[Thms. 4.2 and 5.1]{Bla11} J\'er\'emy Blanc proves the following two results: 

\begin{thm}[J. Blanc]\label{bla1}
$\birplane$ does not admit nontrivial normal closed subgroups.
\end{thm}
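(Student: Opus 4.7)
The plan is to apply the Noether--Castelnuovo theorem, which gives $\birplane=\langle\pgl_3,\sigma\rangle$ where $\sigma(x:y:z)=(yz:xz:xy)$ is the standard quadratic involution. If I can show that any nontrivial closed normal subgroup $N$ contains $\pgl_3$ and $\sigma$, then $N=\birplane$. The tools from \S 2 at my disposal are the continuity of the group operations (Proposition \ref{pro2.1}), the closedness of centralizers (Corollary \ref{cor2.6}), the T1 property (Lemma \ref{lem2.3}), and the stability of normality under closure (Proposition \ref{pro2.9}).

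The core step is to produce one non-identity element of $N\cap\pgl_3$. Indeed, $N\cap\pgl_3$ is automatically $\pgl_3$-invariant by normality of $N$, and $\pgl_3=\aut(\PP^2)$ is simple as an abstract group over an algebraically closed field (since there $\pgl_3$ coincides with $\mathrm{PSL}_3(\Bbbk)$); thus $N\cap\pgl_3\neq\{id\}$ forces $\pgl_3\subset N$. To construct such an element I would start from any $f\in N\setminus\{id\}$ and exploit the commutator morphism $c_f:\birplane\to\birplane$, $h\mapsto hfh^{-1}f^{-1}$. By Proposition \ref{pro2.1} it is a morphism, by normality its image lies in $N$, and by Corollary \ref{cor2.6} its fibre $c_f^{-1}(id)=C_{\birplane}(f)$ is a proper closed subset, so $c_f$ is nontrivial. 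I would then build an algebraic family $\{h_t\}_{t\in T}$ in $\birplane$ along which the base-point configuration of $[h_t,f]$ collapses as $t\to t_0$, forcing $\deg[h_{t_0},f]=1$; since $N$ is closed and the degree function is lower semicontinuous along algebraic families (a result Blanc also establishes, and one rederived in \S 3 of the paper), the limit lies in $N\cap\pgl_3$.

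With $\pgl_3\subset N$ established, $\sigma\in N$ would follow by a similar deformation argument: picking any $g\in N$ of degree at least $2$ and conjugating it by well-chosen one-parameter families in $\pgl_3\subset N$, one produces a family of quadratic transformations in $N$ specializing to $\sigma$ at a limit parameter, and closedness of $N$ concludes. The principal obstacle is the degree-reduction step of the second paragraph: exhibiting an algebraic family of conjugators $h_t$ that drives $\deg[h_t,f]$ all the way down to $1$ demands delicate geometric control over how the base points of $f$ move and collide under conjugation by birational maps, and this is the substantive geometric input that makes Blanc's theorem genuinely deep rather than formal.
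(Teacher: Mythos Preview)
The paper does not prove Theorem~\ref{bla1}; it is stated as a result of Blanc, cited from \cite{Bla11}, and then used as a black box in Proposition~\ref{pro3.3}. There is therefore no proof in the paper to compare your proposal against.

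Regarding your sketch on its own merits: the overall architecture---Noether--Castelnuovo, simplicity of $\pgl_3(\Bbbk)$, and reduction to producing a nontrivial element of $N\cap\pgl_3$---does match Blanc's strategy in \cite{Bla11}. But you explicitly leave the degree-reduction step open, and that step is essentially the whole theorem. Your proposed mechanism (choose a family $\{h_t\}$ so that the base locus of $[h_t,f]$ ``collapses'' and forces $\deg[h_{t_0},f]=1$) is not a proof: lower semicontinuity of the degree along a family only tells you the degree \emph{may} drop at special parameters, not that you can drive it down to~$1$; you give no construction of the family and no reason the limit commutator is nontrivial. Blanc's actual argument for this step is more concrete and does not rely on a limiting process: he uses that $f$ is a local isomorphism near a general point of $\PP^2$ to choose $\alpha\in\pgl_3$ supported away from the indeterminacy of $f$ and $f^{-1}$, so that $f\alpha f^{-1}\alpha^{-1}$ is already an automorphism, i.e.\ lies in $\pgl_3\setminus\{id\}$. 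Your proposal identifies the right target but does not supply the idea that hits it, and you yourself flag this as ``the substantive geometric input''; as written it is a plan, not a proof.
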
 

\begin{thm}[J. Blanc]\label{bla2}
If $f,g\in \birn$, then there exists a morphism $\theta:U\to\birn$, where $U$ is an open subset of $\A^1$ containing $0,1$, such that $\theta(0)=f, \theta(1)=g$. In particular $\birn$ is connected.    
\end{thm}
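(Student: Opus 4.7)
The plan is to reduce the statement to the special case of linking every Cremona transformation to the identity. Declare $f\sim g$ if there exists a morphism $\theta:U\to\birn$ from an open subset $U\subset\A^1$ containing $\{0,1\}$ with $\theta(0)=f$ and $\theta(1)=g$. Reflexivity is witnessed by the constant family, symmetry by the parameter change $t\mapsto 1-t$. For transitivity, suppose $\theta_1$ links $f$ to $h$ on $U_1$ and $\theta_2$ links $h$ to $g$ on $U_2$; by Proposition \ref{pro2.1} the formula
\[
\theta(t) = \theta_2(t)\circ h^{-1}\circ\theta_1(t)
\]
defines a morphism on $U_1\cap U_2$, and evaluation at $t=0,1$ gives $f$ and $g$ respectively. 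Hence it suffices to prove $f\sim id$ for every $f\in\birn$.

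For $f\in\pgl_{n+1}\subset\birn$ this is immediate: $\pgl_{n+1}$ is a connected affine open subvariety of $\PP^{(n+1)^2-1}$, so the line in this projective space joining $f$ and $id$, restricted to the dense open locus of invertible matrices, yields the required morphism $U\to\pgl_{n+1}\hookrightarrow\birn$.

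For a general $f=(f_0:\cdots:f_n)\in\birn$ of algebraic degree $d$, a natural candidate for the required family is the pencil
\[
\theta(t)=\bigl(tf_0+(1-t)x_0h\,:\,\cdots\,:\,tf_n+(1-t)x_nh\bigr),
\]
where $h\in \Bbbk[x_0,\ldots,x_n]$ is a generic homogeneous polynomial of degree $d-1$ (for instance a suitable power of a general linear form). At $t=0$ the map equals $(x_0h:\cdots:x_nh)=id$; at $t=1$ it equals $f$. The substantive content of the proof is to verify that $\theta(t)\in\birn$ for all $t$ in a dense open subset $U\subset\A^1$ containing both $0$ and $1$; concretely, that $\theta$ defines a pseudo-automorphism of $\A^1\times\PP^n$ in the sense of Definition \ref{defi_pseudo}. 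This can be approached either by a Cramer-type construction of the inverse on the variety cut out by $\theta(t)(x)=y$, or by a generic flatness/semicontinuity argument controlling $\deg\theta(t)$.

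The main obstacle lies precisely in this birationality verification. In dimension two it can be bypassed via the Noether--Castelnuovo theorem, reducing the question to $\pgl_3$ and the standard involution $\sigma=(x_1x_2:x_0x_2:x_0x_1)$; for the latter, a concrete pencil such as
\[
\bigl(x_0\ell+tx_1x_2\,:\,x_1\ell+tx_0x_2\,:\,x_2\ell+tx_0x_1\bigr),
\]
with $\ell$ a general linear form, provides the link from $id$ to $\sigma$ on an explicit open subset of $\A^1$. In higher dimension no analogue of Noether--Castelnuovo is available, and one must instead deform $f$ to $id$ directly through a family of resolutions of indeterminacies: given birational morphisms $\pi,\pi':Y\to\PP^n$ with $f=\pi'\circ\pi^{-1}$, one deforms the pair $(\pi,\pi')$ in a one-parameter family of such pairs terminating in $(id,id)$. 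Constructing this family so that birationality is preserved generically is the technically delicate heart of Blanc's argument in \cite{Bla11}.
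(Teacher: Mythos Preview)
The paper does not supply a proof of this theorem; it is quoted from \cite{Bla11} as one of two ``deep results'' and used as a black box. So there is no proof in the paper to compare against, and I can only assess your sketch on its own terms.

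Your reduction via the equivalence relation and the $\pgl_{n+1}$ case are correct. The gap is at the central step. The pencil
\[
\theta(t)=\bigl(tf_0+(1-t)x_0h:\cdots:tf_n+(1-t)x_nh\bigr)
\]
has no reason to land in $\birn$ for $t\neq 0,1$. A generic $(n+1)$-tuple of degree-$d$ forms defines a rational self-map of $\PP^n$ of topological degree $d^n$, not $1$; birationality is a positive-codimension condition in the parameter space $\P_{(d,n,n)}$, and a line joining two birational points will typically meet the birational locus only at those two points. Neither a ``Cramer-type inverse'' nor ``generic flatness'' helps: those show at best that the birational locus is constructible in $t$, not that it is dense in your line. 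You essentially concede this by deferring $n\geq 3$ to \cite{Bla11}, but then what remains is not a proof. (Your explicit $n=2$ pencil also does not specialise to $\sigma$ at $t=1$, and even after repairing the parametrisation one must still check birationality of the intermediate members.)

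Blanc's actual device avoids linear interpolation in the space of forms. After pre- and post-composing by elements of $\pgl_{n+1}$ one may assume $f$ is a local isomorphism at a chosen affine origin $o$ fixing $o$; then the family $\theta(\lambda)=d_\lambda^{-1}\circ f\circ d_\lambda$, with $d_\lambda$ the dilation $x\mapsto\lambda x$, consists of \emph{conjugates} of $f$ for $\lambda\neq 0$ --- hence birational by construction --- and one checks it extends across $\lambda=0$ to the differential of $f$ at $o$, which lies in $\pgl_{n+1}$. The point is that birationality is guaranteed structurally rather than hoped for along a pencil.
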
 

In Theorem \ref{bla2} the open set $U$ is irreducible and the morphism
$\theta$ is continuous. Hence we deduce that $\birn$ is irreducible.

On the other hand, in \cite{CaLa} Serge Cantat and St\'ephane Lamy prove the following result:

\begin{thm}[S. Cantat-S. Lamy]\label{cala}
$\birtwo$ is not a simple (abstract) group, i.e., it contains a non trivial normal subgroup. 
\end{thm}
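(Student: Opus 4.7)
The plan is to adapt the strategy of Cantat-Lamy, combining an action of $\birtwo$ on an infinite-dimensional hyperbolic space with a small cancellation argument. First, I would construct the Picard-Manin space $\mathcal{Z}$, defined as the inductive limit over all smooth projective surfaces $\pi\colon S\to\plane$ obtained from $\plane$ by successive blow-ups of the Néron-Severi groups $N^1(S)_\real$. The intersection form extends to a nondegenerate form on $\mathcal{Z}$ of hyperbolic signature $(1,\infty)$, and $\birtwo$ acts on $\mathcal{Z}$ by isometries. Passing to the upper sheet of the associated hyperboloid gives an action of $\birtwo$ by isometries on an infinite-dimensional hyperbolic space $\mathbb H$.

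Next I would invoke the standard trichotomy for isometries of a $\mathrm{CAT}(-1)$ space: every $f\in\birtwo$ acts as an elliptic, parabolic or loxodromic isometry of $\mathbb H$, and loxodromy is equivalent to the dynamical degree $\lambda(f):=\lim \deg(f^n)^{1/n}$ being strictly greater than $1$. In that case $f$ admits an invariant geodesic axis $\mathrm{Ax}(f)\subset\mathbb H$ along which it translates by $\log\lambda(f)$.

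The core of the proof is to exhibit a \emph{tight} loxodromic element: a loxodromic $f\in\birtwo$ whose setwise stabilizer of $\mathrm{Ax}(f)$ in $\birtwo$ is virtually cyclic, and such that no translate $g\cdot\mathrm{Ax}(f)$ with $g\notin\mathrm{Stab}(\mathrm{Ax}(f))$ stays uniformly close to $\mathrm{Ax}(f)$ along arbitrarily long subsegments. Granted such an $f$, a small cancellation argument—in the spirit of Delzant's and Fujiwara's theory for groups acting on hyperbolic spaces with appropriate properness—ensures that for $n$ sufficiently large the normal subgroup $N\subset\birtwo$ generated by $f^n$ is freely generated by the set of conjugates of $f^n$. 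In particular $N$ is proper: a nontrivial element of $\pgl_3(\Bbbk)\subset\birtwo$ acts elliptically on $\mathbb H$ (it fixes the class of a hyperplane section), hence cannot sit inside a free group of loxodromic isometries. This produces a nontrivial proper normal subgroup, contradicting simplicity.

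The main obstacle is the verification of tightness for an explicit element. Natural candidates are generic compositions of a high-degree Jonquières transformation with a general linear automorphism of $\plane$, or Hénon-type automorphisms suitably embedded in $\birtwo$. Controlling the possible conjugates $gf^ng^{-1}$ whose axes lie close to $\mathrm{Ax}(f)$ requires a delicate analysis of the base-point schemes of the iterates of $f$ on all surfaces dominating $\plane$, and this is the truly technical heart of the argument. Once tightness has been established, the passage to non-simplicity is essentially a formal application of the small cancellation machinery.
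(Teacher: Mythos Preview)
The paper does not prove this theorem: it is stated with attribution to Cantat and Lamy and cited from \cite{CaLa} without argument, the only additional remark being that in \emph{loc.~cit.} the proper normal subgroup is obtained as the normal closure of a very general $f\in\birtwo$ of sufficiently large degree. So there is nothing in the paper to compare your proof against.

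That said, your outline is a faithful sketch of the actual Cantat--Lamy strategy: the isometric action on the Picard--Manin hyperbolic space, the identification of loxodromic elements via the dynamical degree, the notion of a tight element, and the small-cancellation step producing a free normal subgroup. You correctly flag the verification of tightness as the technical heart. One refinement worth noting: in \cite{CaLa} tightness is established not just for isolated examples but for a \emph{very general} element of large degree (a countable intersection of Zariski-open conditions), which is the form quoted in the paper's commentary following the statement. Your candidate examples (generic high-degree Jonqui\`eres composed with a general automorphism) are in the right spirit, but the argument ultimately proceeds by showing that the failure of tightness is a countable union of proper closed conditions rather than by analysing a single explicit map.
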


In fact they prove that for a ``very general'' birational map
$f\in\birtwo$ of degree $d$, with $d\gg 0$, the minimal normal
subgroup containing $f$ is nontrivial. From Theorems \ref{bla1} and
\ref{cala} it follows that all non trivial normal subgroup in
$\birtwo$ are dense. 

Putting all together we obtain:

\begin{pro}\label{pro3.3}
Let $G\subset \bir(\PP^2)$ be a nontrivial normal subgroup. Then  $C_{\bir(\PP^2)}(G)=\{id\}$.  
\end{pro}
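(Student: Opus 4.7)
The strategy is to combine the topological simplicity statement of Theorem \ref{bla1} (actually the observation following Theorem \ref{cala}, that every nontrivial normal subgroup of $\birplane$ is dense) with the continuity of the commutator map established in Proposition \ref{pro2.1}, and the fact that singletons are closed (Lemma \ref{lem2.3}).

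First I would show that $G$ is dense in $\birplane$. By Proposition \ref{pro2.9}(a), the closure $\overline{G}$ is again a normal subgroup, and it is closed by construction. Since $G$ is nontrivial, $\overline{G} \neq \{id\}$, so Theorem \ref{bla1} (interpreted, as the authors do in the paragraph before the proposition, as saying $\{id\}$ and $\birplane$ are the only closed normal subgroups) forces $\overline{G} = \birplane$.

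Next, fix $h\in C_{\birplane}(G)$ and consider the commutator map $c_h:\birplane\to\birplane$, $c_h(f)=hfh^{-1}f^{-1}$. By Proposition \ref{pro2.1} both the product and the inversion are continuous, so $c_h$ is continuous; and by Lemma \ref{lem2.3} the singleton $\{id\}$ is closed, so $c_h^{-1}(\{id\})$ is a closed subset of $\birplane$. By hypothesis it contains $G$, hence also $\overline{G}=\birplane$. Therefore $h$ commutes with every element of $\birplane$, i.e.\ $h$ lies in the center $Z(\birplane)$.

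It remains to verify that $Z(\birplane)=\{id\}$. The center is the centralizer of the whole group, so by Corollary \ref{cor2.6} it is closed, and it is clearly normal. Applying Theorem \ref{bla1} once more, $Z(\birplane)$ is either $\{id\}$ or all of $\birplane$; but the latter is absurd since $\birplane$ is nonabelian (it contains, for instance, $\pgl_3$). Hence $h=id$, proving $C_{\birplane}(G)=\{id\}$. The only step that really uses anything nontrivial is the passage from $G$ to $\overline{G}=\birplane$, i.e.\ the density of nontrivial normal subgroups, which is precisely the combination of \cite{Bla11} and \cite{CaLa} already recorded in the excerpt; everything else is a formal consequence of continuity of the group operations plus the $T_1$ property.
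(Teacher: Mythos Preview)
Your argument is correct and follows the same overall strategy as the paper: show $\overline{G}=\birplane$ via Blanc's theorem, deduce that any $h\in C_{\birplane}(G)$ lies in the center, and then verify the center is trivial. The only genuine difference is in this last step. The paper gives an elementary, self-contained argument that $Z(\birplane)=\{id\}$ by hand: for $f$ in the center and a general line $L$, one builds a quadratic transformation contracting $L$ and uses $f\sigma_L=\sigma_L f$ to force $f\in\pgl(3,\Bbbk)$, hence $f\in Z(\pgl(3,\Bbbk))=\{id\}$. You instead reapply Theorem~\ref{bla1}: the center is a closed normal subgroup, hence trivial or everything, and nonabelianity rules out the latter. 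Your route is shorter and avoids the explicit computation, at the cost of invoking Blanc's deep result a second time; the paper's route keeps the triviality of the center independent of that theorem. One small remark: the Cantat--Lamy theorem is not actually used in your proof (nor in the paper's); it only guarantees that nontrivial normal subgroups exist, so that the proposition is not vacuous. Your closing sentence slightly overstates the dependency.
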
 
\begin{proof}
Suppose $C_{\bir(\PP^2)}(G)\neq \{id\}$. The closure $\overline{G}$ of
$G$ is a normal subgroup, then it coincides with the entire Cremona
group. If $f\in C_{\bir(\PP^2)}(G)$, then $G$ is contained in
the centralizer of $f$, which is closed. We deduce that $f$ commute
with all the elements of $\bir(\PP^2)$, that is $C_{\bir(\PP^2)}(G)$
coincides with the center $Z(\bir(\PP^2))$ of ${\bir(\PP^2)}$. Since 
  $Z(\bir(\PP^2))=\{id\}$, the result follows.   
 For the convenience of the reader we give a
 proof of the well known fact that  $Z(\bir(\PP^2))=\{id\}$.

Recall that $\bir(\PP^2)$ is generated by quadratic transformations of the form $g_1\sigma g_2 $ where
$g_1,g_2\in\pgl(3,\Bbbk)$ and $\sigma=(x_1x_2:x_0x_2:x_0x_1)$ is the
\emph{standard} quadratic transformation. Take $f\in
Z(\bir(\PP^2))$. If $L\subset\PP^2$ is a general line, then we may
construct a quadratic transformation $\sigma_L$ which contracts $L$ to
a point and such that $f$ is well defined in this point. Since
$f\sigma_L=\sigma_L f$ and we may suppose $f$ is well defined and
injective on an open set of $L$ we deduce $f$ transforms $L$ into a
curve contracted by $\sigma_L$, that is, the strict transform of $L$
under $f$ is a line, and then $f\in\pgl(3,\Bbbk)$, so $f\in
Z(\pgl(3,\Bbbk))=\{id\}$.   
  \end{proof}

\subsection{Writings and degree of a pseudo-morphism}\ %

Let $\varphi:T\to\bir(\PP^n)$ be a morphism, where $T$ is an 
irreducible variety. Denote by $\pi: T\times\PP^n\to T$ the projection
onto the first factor. Then the pseudo-automorphism $\varphi$ (Definition
\ref{defi_pseudo})  verifies the following commutative diagram   
\[
\xymatrix{T\times \PP^n\ar@{-->}[rr]^\varphi\ar@{->}[rd]_\pi& &T\times \PP^n\ar@{->}[ld]^\pi\\
&T&}
\]
In other words, $\varphi$ induces  a commutative diagram 
\[
\xymatrix{\Bbbk(T\times \PP^n)& &\Bbbk(T\times \PP^n)\ar@{-->}[ll]_{\varphi^*}\\
& \Bbbk(T)\ar@{^{(}->}[ru]_{\pi^*}\ar@{_{(}->}[lu]^{\pi^*}& }.
\]

We deduce that there exist rational functions
$\varphi_0,\ldots,\varphi_n\in\Bbbk(T\times \PP^n)$ such that  
\[
\varphi(t,\x)=\bigl(\varphi_0(t,\x):\cdots:\varphi_n(t,\x)\bigr),
\]
where the formula above holds for $(t,\x)$ in an open set $U\subset 
T\times \PP^n$. Moreover,  we may suppose $U\cap (\{t\}\times
\PP^n)\neq \emptyset$ for all $t\in T$. Observe that we are assuming
that  $U$ is contained in the 
domain of definition  of $\varphi_i$, for all $i$. Hence, for all $t\in
T$, there exists an  open
set  $U_{t}\subset \{t\}\times \PP^n$ where all $\varphi_i|_{{U_t}}$ are well
defined. We can also assume that there exists $i_t$ such that
$\varphi_{i_t}$ does not vanish in $U_{t}$. 

Let $V\subset T$ be an affine nonempty open subset. From the remarks
above, we deduce that 
there exists a (non necessarily unique) representation of  $\varphi$ of the form  
\begin{equation}
\varphi(t,\x)=\bigl(f_0(t,\x):\cdots:f_n(t,\x)\bigr), (t,\x)\in
U'\subset U\cap(V\times \PP^n),
\label{writing}
\end{equation}
where $U'\subset U\cap(V\times \PP^n) $ is an open subset and $f_0,\ldots,f_n\in \Bbbk[V\times\A^{n+1}]=\Bbbk[V]\otimes
\Bbbk[x_0,\ldots,x_n]$ are homogeneous polynomials  in
$x_0,\ldots,x_n$, of the same 
degree. In particular, if $U'\cap
\bigl(\{t_0\}\times \PP^n\bigr)\neq \emptyset$, then 
\[
\varphi_{t_0}(\x)= \bigl(f_0(t_0,\x):\cdots:f_n(t_0,\x)\bigr)
\]
for $\x$ in an open set $U'_{t_0}\subset \PP^n$;  that is, there exist
$\x_0\in U'_{t_0}$ and $i_0$ such that  $ f_{i_0}(t_0,\x_0)\neq 0$. Observe that $
\{t_0\}\times U'_{t_0}\subset U_{t_0}$. 

\begin{defi}\label{defi:writing}
With the notations above, consider the  $(n+1)$-uple
$(f_0,\ldots,f_n)$ satisfying  (\ref{writing}) and let $\ell=\deg(f_i)$. We say that 
$w_V^\varphi=(f_0,\dots,f_n)$ is a \emph{writing of  $\varphi$ on
  $V$}. The positive integer $\deg(w_V^\varphi):=\ell$ is said to be the
\emph{degree} of $w_V^\varphi$. 
\end{defi}

% \begin{rem}
% If $V\subset T$ is an affine open set, then the morphism  $\varphi$
% restricts to $V$ and defines a morphism $\varphi_V: V\to
% \bir(\PP^n)$. The above construction gives a writing
% $w_V^{\varphi_V}=(g_0,\dots,g_n)$ for $\varphi_V$ and clearly 
% \[\varphi(t,\x)=\bigl(g_0(t,\x):\cdots:g_n(t,\x)\bigr)\]
% for $(t,\x)$ in an open set of $V\times \PP^n$. We denote $w_V^\varphi =w_V^{\varphi_V}$ and say that $w_V^\varphi$ is a \emph{writing of $\varphi$ on $V$}. 
% \end{rem}

\begin{rem}
Let $ w=w^\varphi_V=(f_0,\dots,f_n)$ be a writing of $\varphi$ on an
affine open subset $V\subset T$. We introduce the ideal $I(w)\subset
\Bbbk[V]\otimes \Bbbk[\x]$ 
generated by $f_0, \ldots, f_n$. Then  $I(w)$ defines a subvariety
$X^w\subset V\times \A^{n+1}$. Notice that $X^w$ is stable
under the action of $\Bbbk^*$ on $V\times \A^{n+1}$ defined by
$\lambda\cdot (t,x)\mapsto (t,\lambda x)$. Moreover, the projection
$\pi:X^w\to V$ onto the first factor is equivariant and, by
definition, surjective.  The function $t\mapsto \dim\pi^{-1}(t)$ is
upper-semicontinuous, from which we deduce $V_{i}:=\{t;
\dim\pi^{-1}(t)\geq i\}$ is closed in $V$ for all $i=1, \dots, n+1$. 

Since
$\pi^{-1}(t)=X^{w}\cap \bigl((\{t\}\times\A^{n+1}\bigr)$, it follows that
$\dim\pi^{-1}(t)>n$ if and only if $\pi^{-1}(t)=\{t\}\times \A^{n+1}$. In
other words, an element $t\in V$ belongs to $V_{n+1}$ if and only if
$\bigl(\{t\}\times \PP^n\bigr)\cap U'=\emptyset$, where $U'\subset
V\times \PP^n$ is the domain of definition of the rational map $(t,\x)
\mapsto \bigl(t, \bigl(f_0(t,\x):\dots:
f_n(t,\x)\bigr)\bigr)$. Observe that $V_{n+1}\subsetneq V$. 
 \end{rem}

The preceding remark motivates the following

\begin{defi}
Let $\varphi:T\to \birn$ be a morphism and $t\in T$. A \emph{writing
  passing through $t$} is a writing $w^\varphi_V$ of $\varphi$ such
that $t\in V\setminus V_{n+1}$.
\end{defi}

\begin{lem}
Let $\varphi:T\to \birn$ be a morphism and $t_0\in T$. Then there exists
a writing $w^\varphi_V$ passing through $t_0$.
\end{lem}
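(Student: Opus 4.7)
The plan is to build a writing passing through $t_0$ from the fact that $\varphi$, viewed as a pseudo-automorphism $T\times \PP^n\dashrightarrow T\times \PP^n$, is actually regular at some point of the fibre over $t_0$. From that local regular expression I will homogenize to obtain the polynomials $f_0,\dots,f_n$, and arrange that the ``homogenizing coordinate'' $f_0$ does not vanish identically on $\{t_0\}\times \PP^n$.

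First, by Definition \ref{defi_pseudo}(b) the open set $U_{t_0}=U\cap(\{t_0\}\times \PP^n)$ is non-empty, so I choose $\x_0\in \PP^n$ with $(t_0,\x_0)\in U$. After permuting the homogeneous coordinates of source and target if necessary, I may assume both $\x_0$ and its image $\varphi_{t_0}(\x_0)$ lie in the affine chart $\{x_0\neq 0\}$, say $\x_0=(1:a_1:\cdots:a_n)$. In this chart the map $\varphi$ is expressed as $(t,\x)\mapsto \bigl(t,(1:g_1(t,\x):\cdots:g_n(t,\x))\bigr)$ with each $g_i\in \Bbbk(T\times \PP^n)$ regular at $(t_0,\x_0)$. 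Choosing an affine open $V\ni t_0$ in $T$, the local ring of $V\times \A^n$ at $(t_0,\x_0)$ is the localization of $\Bbbk[V]\otimes \Bbbk[y_1,\dots,y_n]$ (with $y_j=x_j/x_0$) at the maximal ideal of that point; hence each $g_i$ admits a representation $g_i=p_i/q_i$ with $p_i,q_i\in \Bbbk[V][y_1,\dots,y_n]$ and $q_i(t_0,a_1,\dots,a_n)\neq 0$. Setting $q=q_1\cdots q_n$, one has $q(t_0,a_1,\dots,a_n)\neq 0$ and $qg_1,\dots,qg_n\in \Bbbk[V][y_1,\dots,y_n]$.

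Let $d$ be the maximum of the degrees in $y_1,\dots,y_n$ of $q$ and of $qg_1,\dots,qg_n$, and put $f_0(t,\x)=x_0^d\,q(t,x_1/x_0,\dots,x_n/x_0)$ and $f_j(t,\x)=x_0^d(qg_j)(t,x_1/x_0,\dots,x_n/x_0)$ for $j=1,\dots,n$. These are homogeneous polynomials of degree $d$ in $x_0,\dots,x_n$ with coefficients in $\Bbbk[V]$. On the dense open subset $V\times \{x_0\neq 0\}$ of $V\times \PP^n$ one has $(f_0:\cdots:f_n)=(q:qg_1:\cdots:qg_n)=(1:g_1:\cdots:g_n)$, which coincides with $\varphi$; by uniqueness of rational maps, $w=(f_0,\dots,f_n)$ is a writing of $\varphi$ on $V$ in the sense of Definition \ref{defi:writing}. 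Finally $f_0(t_0,1,a_1,\dots,a_n)=q(t_0,a_1,\dots,a_n)\neq 0$, so $f_0(t_0,\cdot)$ does not vanish identically on $\PP^n$; equivalently, $\{t_0\}\times \PP^n\not\subset X^w$ and thus $t_0\notin V_{n+1}$.

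The only delicate point will be the passage from the statement ``$g_i$ is regular at $(t_0,\x_0)$ as a rational function on $T\times \PP^n$'' to the concrete representation $g_i=p_i/q_i$ with $p_i,q_i\in \Bbbk[V][y_1,\dots,y_n]$ and $q_i(t_0,a_1,\dots,a_n)\neq 0$ on a suitable affine neighbourhood $V$ of $t_0$; this is handled by observing that $V\times \A^n$ is affine, so that its local ring at the closed point $(t_0,\x_0)$ is precisely the localization of its coordinate ring $\Bbbk[V]\otimes \Bbbk[y_1,\dots,y_n]$ at the maximal ideal of $(t_0,\x_0)$. Everything else is a routine homogenization argument.
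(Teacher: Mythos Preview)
Your proof is correct and follows essentially the same approach as the paper's: pick a point $(t_0,\x_0)$ where the pseudo-automorphism is regular, express the components as fractions of polynomials with denominators non-vanishing at that point, clear denominators, and observe that the resulting polynomial tuple does not vanish identically on $\{t_0\}\times\PP^n$. The only cosmetic difference is that you pass explicitly through an affine chart and then homogenize, whereas the paper writes the rational components directly as $f_i/g_i$ with $f_i,g_i\in\Bbbk[T]\otimes\Bbbk[\x]$ and clears denominators in one step; the coordinate permutation you invoke is harmless since it only relabels the $f_i$ and leaves $V_{n+1}$ unchanged.
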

\begin{proof}
By definition, there exists $\x_0\in \P^n$ such that $\varphi$ is
defined in $(t_0, \x_0)$. Hence, there exist $f_0,g_0,\dots, f_n,g_n\in
\Bbbk[T]\otimes \Bbbk[\x]$ such that $(g_0\cdots g_n)(t_0,\x_0)\neq 0$
and $\varphi(t,  \x)=\bigl(f_0/g_0(t,\x): \dots : f_n/g_n(t,\x)\bigr)$,
where the equality holds in an open neighborhood $A$ of $(t_0,\x_0)$ in $ T\times
\P^n$. Eliminating  denominators, we deduce  that
\[
\varphi(t,  \x)=\bigl(h_0(t,\x): \dots : h_n(t,\x)\bigr),
\]
where $h_i\in \Bbbk[T]\otimes \Bbbk[\x]$ and the above formula holds in an open subset $A' \subset A$,
containing $(t_0,\x_0)$. If $V\subset T$ is an affine open subset such that for all $t\in
V$ there exists $\x\in\P^n$ with $(t,\x)\in A'$, it is clear that
$w^\varphi_V=(h_0,\dots,h_n) $ is a writing of 
$\varphi$ through $t_0$.
\end{proof}

\begin{defi}
Let $\varphi:T\to\bir(\PP^n)$ be a morphism, where $T$ is an irreducible
variety. Denote by $\calv$ the family of nonempty affine open sets in
$T$ on which there exists, at most, a writing of $\varphi$. The  \emph{degree} of $\varphi$ is the positive integer 
\[
\Deg(\varphi):=\min\{\deg(w_V^\varphi): V \in\calv \}.
\]
\end{defi}

Note that two $n$-uples $(f_0,\ldots,f_n)$ and $ (f'_0,\ldots,f'_n)$,
with $\deg(f_i)=\deg(f'_i)=\Deg(\varphi)$ define the same writing on
an open set $V$ if and only if they  
coincide up to multiplication by a nonzero element in
$\Bbbk(V)=\Bbbk(T)$.

For $t\in T$ we denote by $\deg(\varphi_t)$ the usual
algebraic degree of the map $\varphi_t:\PP^n\tor\PP^n$; it is the
minimal degree of components among the $(n+1)$-uples of homogeneous polynomials defining
$\varphi_t$. 

By applying (\ref{writing})  we obtain that if $t\in T$, then
$\deg(\varphi_{t})\leq \deg(w_V^\varphi)$ for every  writing
$w_V^\varphi$ passing through $t$. 
 Moreover, we have the following

\begin{lem}\label{lem4.1}
 Let  $w=w_V^\varphi$ be a writing for the morphism $\varphi:T\to
 \birn$ and $t\in V\setminus V_{n+1}$. Then the following assertions are equivalent:
 
(a) $t\in V_{n}$.

(b)  There is a codimension 1 subvariety $X^w_t\subset\A^{n+1}$ such that $\pi^{-1}(t)=\{t\}\times X^w_t$. 

(c) $\deg(\varphi_t)<\deg(w)$.
\end{lem}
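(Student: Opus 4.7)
The plan is to treat (a)$\iff$(b) as essentially tautological from the construction of $X^w$, and then to reduce (b)$\iff$(c) to factoring out a gcd in the UFD $\Bbbk[x_0,\ldots,x_n]$.

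For (a)$\iff$(b), I would set $Z_t:=V\bigl(f_0(t,\x),\ldots,f_n(t,\x)\bigr)\subset\A^{n+1}$, so that $\pi^{-1}(t)=\{t\}\times Z_t$ by the very definition of $X^w$. Since $t\notin V_{n+1}$, some $f_i(t,\x)$ is not identically zero, whence $Z_t\subsetneq \A^{n+1}$ and $\dim Z_t\leq n$. Consequently $t\in V_n$ amounts to $\dim Z_t=n$, i.e.\ $Z_t$ is a codimension-$1$ subvariety, which is exactly (b) with $X^w_t=Z_t$.

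For (b)$\iff$(c), the key observation is that, since each $f_i$ is homogeneous of degree $\ell:=\deg(w)$ in $\x$, the specialization $f_i(t,\x)\in\Bbbk[\x]$ is either zero or homogeneous of degree exactly $\ell$; at least one is nonzero by $t\notin V_{n+1}$. Let $g=g(\x)$ be the gcd (in the UFD $\Bbbk[\x]$) of the nonzero $f_i(t,\x)$'s, homogeneous of degree $d\leq\ell$, and write $f_i(t,\x)=g(\x)\tilde f_i(\x)$ (setting $\tilde f_i=0$ when $f_i(t,\x)=0$). The nonzero $\tilde f_i$'s then have gcd $1$, so their common zero locus has codimension $\geq 2$ in $\A^{n+1}$. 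Hence
\[
Z_t\;=\;V(g)\,\cup\, V(\tilde f_0,\ldots,\tilde f_n),
\]
and the codimension-$1$ part of $Z_t$ is $V(g)$, which is a hypersurface precisely when $d\geq 1$. On the other hand $\varphi_t=(\tilde f_0:\cdots:\tilde f_n)$, and the $\tilde f_i$'s are coprime homogeneous polynomials of degree $\ell-d$, so $\deg(\varphi_t)=\ell-d$. Therefore (b)$\iff d\geq 1 \iff \deg(\varphi_t)<\ell=\deg(w)\iff$ (c).

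The main obstacle is modest bookkeeping: one has to take the gcd only over the nonzero specializations (allowing some $f_i(t,\x)$ to vanish identically), and to invoke the UFD property of $\Bbbk[\x]$ in order to conclude that the ``residual'' locus $V(\tilde f_0,\ldots,\tilde f_n)$ has codimension at least $2$, so that the codimension-$1$ part of $Z_t$ is genuinely detected by $g$. Everything else follows immediately from the definitions of $V_n$, $Z_t$ and $\deg(\varphi_t)$.
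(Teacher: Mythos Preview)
Your proof is correct and follows essentially the same approach as the paper's own proof. The paper dismisses (a)$\iff$(b) as ``obvious'' and reduces (b)$\iff$(c) to the observation that $\deg(\varphi_t)<\deg(w)$ holds if and only if the specialized polynomials $f_i(t,\x)$ share a nontrivial common factor; you have simply spelled out this latter equivalence more carefully via the gcd decomposition and the codimension argument for $V(\tilde f_0,\ldots,\tilde f_n)$.
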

\begin{proof}
The  equivalence of assertions (a) and (b) is obvious. In order to
prove that (b) is equivalent to (c)  let  $w=(f_0,
\ldots,f_n)$; then for every $t\in V\setminus V_{n+1}$ the rational
map  
\[
\x\mapsto \bigl(f_0(t,\x): \ldots:f_n(t,\x)\bigr)
\]
coincides with $\varphi_t$. Therefore $\deg(\varphi_t)<\deg(f_i)$ if and only if the polynomials $g_0,\ldots,g_n\in \Bbbk[\x]$ defined by $g_i(\x)=f_i(t,\x)$, where $t$ is fixed and $i=0,\ldots,n$, admit a nontrivial factor.    
\end{proof}

The following example is taken from \cite[Lemma 2.13]{BlFu}

\begin{exa}
Let $T\subset \PP^2$ be the projective nodal cubic curve of equation $a^3+b^3-abc=0$, with singular point $o=(0:0:1)$, and consider the morphism  $\varphi:T\to \birn$ defined by 
\[\varphi(a:b:c)=(x_0f:x_1g:x_2f:\cdots:x_n f),\]
where 
\[f=bx_0^2+cx_0x_2+ax_2^2,\ g=(a+b)x_0^2+(b+c)x_0x_2+ax_2^2;\]
note that $\varphi_o=(x_0^2x_2:x_0x_1x_2:x_0^2x_2:\cdots:x_0^2x_n)$ is the identity map. 

Set $ f'=abf$ and $ g'=abg$, that is
\[f'=ab^2x_0^2+(a^3+b^3)x_0x_2+a^2bx_2^2,\ g'=ab(a+b)x_0^2+(ab^2+a^3+b^3)x_0x_2+a^2bx_2^2.\]
If $V\subset T$ is the affine open set defined by $c=1$, 
then $w^\varphi_V=(x_0f',x_1g',x_2f',\ldots,x_n f')$ is a writing of
$\varphi$ on $V$ with degree 3. Clearly $o\in V_{n+1}$ and
$w^\varphi_V$ is through all non-singular point in $T$. As it follows
from  \emph{loc. cit} the polynomial  $ax_0+bx_2$ defines  (locally) a higher
common divisor for $f'$ and $g'$ in $\Bbbk[V']\otimes \Bbbk[\x]$ where
$V':=V\setminus\{o\}$.  Hence $V=V_n$. Dividing all components in
$w^\varphi_V$ by   $ax_0+bx_2$ we obtain a new  writing on $V$ of  degree $2$. One deduces $\Deg(\varphi)=2$.      
\end{exa}

\begin{rem}\label{rem:DEGDESIGN}
Consider a morphism  $\varphi:T\to
\bir(\PP^n)$ and let $U$ and $f:U\to 
\PP^n$  be as in Definition \ref{defi_pseudo}. If 
 $\sigma:S\to T$ is a birational morphism it follows that the morphism
\[
 (\sigma\times id)^{-1}(U)\to S\times\PP^n, (s,\x)\mapsto \bigl(s,f(\sigma(s),\x)\bigr),
\]
induces a  morphism $\varphi\circ \sigma:S\to \birn$.

If $s\in S$, then  $\bigl((\sigma\times
id)^{-1}(U)\bigr)_s\simeq U_{\sigma(s)}$ and up to this isomorphism the birational map $\varphi_{_{\sigma(s)}}$ coincides
with $(\varphi\circ\sigma)_s$.   
\end{rem}

\begin{lem}\label{lem:DEGDESIGN}
Let  $\varphi:T\to
\bir(\PP^n)$ be a morphism and 
 consider a birational morphism $\sigma:S\to T$. Then
 $\Deg(\varphi)=\Deg(\varphi\circ \sigma)$. 
\end{lem}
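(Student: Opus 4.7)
The plan is to prove both inequalities $\Deg(\varphi\circ\sigma)\leq\Deg(\varphi)$ and $\Deg(\varphi)\leq\Deg(\varphi\circ\sigma)$ by establishing a natural correspondence between writings of $\varphi$ and writings of $\varphi\circ\sigma$ that preserves the degree in $\x$. The two crucial ingredients are: (i) $\sigma$ being birational yields an isomorphism of fields $\sigma^*:\Bbbk(T)\to\Bbbk(S)$, and (ii) $\sigma$ restricts to an isomorphism $\sigma^{-1}(T_0)\cong T_0$ for some dense open $T_0\subset T$.

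For $\Deg(\varphi\circ\sigma)\leq\Deg(\varphi)$, start from a writing $w_V^\varphi=(f_0,\dots,f_n)$ of minimal degree $d=\Deg(\varphi)$ on an affine open $V\subset T$, and define $g_i(s,\x):=f_i(\sigma(s),\x)$; these are homogeneous of degree $d$ in $\x$ and lie in $\Bbbk[\sigma^{-1}(V)]\otimes\Bbbk[\x]$. Pick a non-empty affine open $W\subset\sigma^{-1}(V)$ small enough so that the open set where $\varphi(t,\x)=(f_0:\cdots:f_n)(t,\x)$ holds meets $\sigma(W)\times\PP^n$ (possible because $\sigma$ is dominant). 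Remark \ref{rem:DEGDESIGN} then guarantees that $(g_0|_W,\dots,g_n|_W)$ is a writing of $\varphi\circ\sigma$ on $W$, of degree $d$.

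For the reverse inequality, start with a writing $w_W^{\varphi\circ\sigma}=(g_0,\dots,g_n)$ of minimal degree $d'=\Deg(\varphi\circ\sigma)$ on an affine open $W\subset S$. Expanding $g_i=\sum_\alpha a_{i,\alpha}\,\x^\alpha$ with $a_{i,\alpha}\in\Bbbk[W]\subset\Bbbk(S)$, the isomorphism $\sigma^*$ provides unique $\tilde a_{i,\alpha}\in\Bbbk(T)$ with $\sigma^*(\tilde a_{i,\alpha})=a_{i,\alpha}$. Choose an affine open $V\subset T_0$ on which every $\tilde a_{i,\alpha}$ is regular and such that $\sigma^{-1}(V)\cap W\neq\emptyset$. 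Define $\tilde g_i:=\sum_\alpha\tilde a_{i,\alpha}\,\x^\alpha\in\Bbbk[V]\otimes\Bbbk[\x]$, which is homogeneous of degree $d'$ in $\x$. Applying Remark \ref{rem:DEGDESIGN} on $\sigma^{-1}(V)\cap W$ and transporting via the isomorphism $\sigma\colon\sigma^{-1}(T_0)\to T_0$, we obtain that $\varphi(t,\x)=(\tilde g_0(t,\x):\cdots:\tilde g_n(t,\x))$ on a non-empty open subset of $V\times\PP^n$, so $(\tilde g_0,\dots,\tilde g_n)$ is a writing of $\varphi$ on $V$ of degree $d'$.

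The main obstacle is the pushforward step: rigorously verifying that the candidate tuple $(\tilde g_0,\dots,\tilde g_n)$ produced from $(g_0,\dots,g_n)$ does satisfy the defining equation (\ref{writing}) for $\varphi$ on a non-empty open subset of $V\times\PP^n$. This is exactly where the birationality (not mere dominance) of $\sigma$ is used twice: first to lift each coefficient $a_{i,\alpha}\in\Bbbk(S)$ to an element of $\Bbbk(T)$ via the surjectivity of $\sigma^*$, and then to identify $(\varphi\circ\sigma)_s$ with $\varphi_{\sigma(s)}$ on the open set where $\sigma$ is invertible, via Remark \ref{rem:DEGDESIGN}.
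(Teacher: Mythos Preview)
Your argument is correct. Both inequalities are established soundly: the pullback of a minimal writing of $\varphi$ along $\sigma$ gives a writing of $\varphi\circ\sigma$ of the same degree (using only dominance of $\sigma$), and for the reverse you correctly exploit the field isomorphism $\sigma^*:\Bbbk(T)\xrightarrow{\sim}\Bbbk(S)$ to transport coefficients back, together with the fact that $\sigma$ restricts to an isomorphism over a dense open $T_0\subset T$ so that the resulting tuple really is a writing of $\varphi$.

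The paper, however, takes a shorter route. It first observes the elementary fact that for any nonempty open $U\subset T$ one has $\Deg(\varphi)=\Deg(\varphi|_U)$: any writing on an affine open of $U$ is already a writing of $\varphi$, and conversely any writing of $\varphi$ restricts to one of $\varphi|_U$. This single observation immediately reduces the lemma to the case where $\sigma$ is an isomorphism (by replacing $T$ and $S$ with the open subsets over which $\sigma$ is an isomorphism), and that case is trivial. Your proof is essentially an unpacking of this reduction: your second inequality is precisely the transport-along-the-isomorphism step, carried out by hand at the level of coefficients. The paper's formulation has the advantage of isolating the reusable principle ``$\Deg$ is insensitive to restriction to dense opens'', while yours has the merit of being fully explicit about where birationality (as opposed to mere dominance) is needed.
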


\begin{proof}
Notice that if $\varphi:T\to \birn$ is a morphism and $U \subset T$ is an open
subset, then $\varphi|_U:U\to \birn$ is also a morphism, and  
clearly  $\Deg (\varphi)= \Deg (\varphi|_{U})$. Then it suffices to prove the result when  $\sigma$
is an  
isomorphism, in which case the result is trivial.
\end{proof}

% Furthermore, if $w^\varphi_V=(f_0,\ldots,f_n)$ is a writing for
% $\varphi$ on $V$ and $W\subset \sigma^{-1}(V)$ is an affine open
% set, then $f_i\circ(\sigma\times id)|_{W}\in \Bbbk[W]\otimes
% \Bbbk[\x]$, $i=0,\ldots,n$ and $(g_0,\ldots,g_n)$ defines  a writing
% $w^{\varphi\circ\sigma}_W$ for  $\varphi\circ\sigma$ on $W$; we say
% this is the writing of $\varphi\circ\sigma$ on $W$ induced from
% $w^\varphi_V$ by $\sigma$. Clearly
% $\deg(w^\varphi_V)=\deg(w^{\varphi\circ\sigma}_W)$. If in addition
% $w^\varphi_V$ passes through a point $t\in T$ and $\sigma(s)=t$, for
% $s\in S$, then $w^{\varphi\circ\sigma}_W$ passes through $s$; notice
% that the converse does not hold as shows the following example. 

\subsection{Degree and semicontinuity} \ %

\begin{pro}\label{pro4.1}
Let  $\varphi:T\to\bir(\PP^n)$ be a morphism. Consider the set 
$U_\varphi:=\{t\in T \mathrel{:} \deg(\varphi_t)=\Deg(\varphi) \}$. Then

(a) $U_\varphi$ is a nonempty open subset of $T$.

(b) $\deg(\varphi_{t})\leq \Deg(\varphi)$ for all $t\in T$. 
\end{pro}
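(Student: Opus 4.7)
The plan is to establish (b) first by reducing to the case of a smooth curve, and then to deduce (a) using the same type of construction.

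To prove (b), for $t_0\in T$, I would pull back $\varphi$ along the normalization $\tau:\widetilde{C}\to T$ of an irreducible curve $C\subset T$ through $t_0$ that meets the domain of a minimal writing of $\varphi$. This produces a morphism $\widetilde{\varphi}:=\varphi\circ\tau:\widetilde{C}\to\birn$ with $\widetilde{\varphi}_{\widetilde{t}_0}=\varphi_{t_0}$ for every $\widetilde{t}_0$ over $t_0$; since any writing of $\varphi$ pulls back to a writing of $\widetilde{\varphi}$ of the same degree in $\x$, one has $\Deg(\widetilde{\varphi})\leq\Deg(\varphi)$, and it suffices to prove the bound for $\widetilde{\varphi}$. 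On the smooth curve $\widetilde{C}$, take a minimal writing $w=(f_0,\ldots,f_n)$ of degree $d=\Deg(\widetilde{\varphi})$ on some $\widetilde{V}\subset\widetilde{C}$. Since $\calo_{\widetilde{C},\widetilde{t}_0}$ is a DVR with uniformizer $\pi$, letting $m:=\min_{i,\alpha}v_{\widetilde{t}_0}(a_{i,\alpha})$ for the coefficients $a_{i,\alpha}\in\Bbbk(\widetilde{C})$ of the $f_i$, the scaled polynomials $\pi^{-m}f_i$ have coefficients regular at $\widetilde{t}_0$, with at least one of them a unit there. On a sufficiently small affine neighborhood $\widetilde{V}'\ni\widetilde{t}_0$ these form a writing of $\widetilde{\varphi}$ of degree $d$ passing through $\widetilde{t}_0$, and Lemma \ref{lem4.1} then yields $\deg(\widetilde{\varphi}_{\widetilde{t}_0})\leq d$, which gives (b).

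For the non-emptiness in (a), take any minimal writing $w_0=(f_0,\ldots,f_n)$ on $V_0\subset T$ of degree $d=\Deg(\varphi)$. If the $f_i$ admitted a non-trivial common factor in $\Bbbk(T)[\x]$, dividing it out and clearing denominators would produce a writing on some open subset of strictly smaller degree, contradicting minimality. Hence the $f_i$ are coprime in $\Bbbk(T)[\x]$, so the generic fiber of the incidence variety $X^{w_0}\to V_0$ has codimension at least two in $\A^{n+1}$. By upper semicontinuity of fiber dimension, $(V_0)_n\cup (V_0)_{n+1}\subsetneq V_0$, and on the non-empty open complement Lemma \ref{lem4.1} gives $\deg(\varphi_t)=d$, exhibiting a non-empty open subset of $U_\varphi$.

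For the openness of $U_\varphi$, I would show that every $t_0\in U_\varphi$ admits a writing of degree $d$ on an affine neighborhood $V'\ni t_0$ passing through $t_0$; granted this, $V'\setminus \bigl((V')_n\cup (V')_{n+1}\bigr)$ is an open neighborhood of $t_0$ contained in $U_\varphi$, again by Lemma \ref{lem4.1} and semicontinuity of fiber dimension. Such a writing is produced by rescaling $w_0$ by a rational function $c\in\Bbbk(T)$ so that $cf_i\in\calo_{T,t_0}[\x]$ and at least one coefficient of $cf_i$ is a unit at $t_0$; equivalently, the finitely generated fractional ideal of $\calo_{T,t_0}$ generated by the coefficients $a_{i,\alpha}$ should be principal. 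At smooth points this is immediate since $\calo_{T,t_0}$ is a UFD; the main obstacle is the singular case, where one must reduce to a smooth birational model as in the proof of (b) in order to produce such $c$.
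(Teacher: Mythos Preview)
Your arguments for (b) and for the non-emptiness of $U_\varphi$ are correct and in spirit close to the paper's; in fact your non-emptiness argument, working directly over $\Bbbk(T)[\x]$, is a bit slicker than the paper's local version (which first reduces to $T$ smooth and then uses that $\calo_{T,t_0}[\x]$ is a UFD to factor out a common divisor).

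There is, however, a genuine gap in your openness argument. You claim that at a smooth point $t_0$ the finitely generated fractional ideal of $\calo_{T,t_0}$ generated by the coefficients $a_{i,\alpha}$ of a minimal writing is principal ``since $\calo_{T,t_0}$ is a UFD''. This is false as soon as $\dim T\geq 2$: a regular local ring of dimension $\geq 2$ is a UFD but not a PID, and the ideal generated by two independent regular parameters is never principal. Concretely, if $T=\A^2$ and two of the coefficients happen to be the coordinate functions $x,y$, no single $c\in\Bbbk(T)^\times$ makes all $ca_{i,\alpha}$ regular at the origin with one of them a unit. Thus your ``constant rescaling'' of the fixed minimal writing $w_0$ cannot in general produce a writing of degree $\Deg(\varphi)$ through $t_0$; the idea only works on curves, which is exactly why it succeeds in your proof of (b).

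The paper avoids this by \emph{not} trying to transport the chosen minimal writing to $t_0$. Instead (after reducing to $T$ smooth) it takes \emph{any} writing $w'=w^\varphi_U=(f'_0,\ldots,f'_n)$ passing through $t_0$ and, if $\deg(w')>\Deg(\varphi)$, uses that $\calo_{T,t_0}[\x]$ is a UFD to factor out a common \emph{polynomial} divisor $g\in\calo_{T,t_0}[\x]$ of the $f'_i$ (coming from the codimension~$1$ part of $X^{w'}$ near $t_0$), thereby lowering the degree in $\x$. Iterating, one reaches a writing of degree exactly $\Deg(\varphi)$ through $t_0$, and then Lemma~\ref{lem4.1} gives the open neighbourhood $V'\setminus V'_n\subset U_\varphi$. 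The key point is that UFD-ness is used for factorization in $\calo_{T,t_0}[\x]$, not for principality of ideals in $\calo_{T,t_0}$; your argument conflates the two.
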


\begin{proof}
We may reduce the proof to the case where $T$ is smooth. Indeed, if  $T$ is singular we consider a proper birational surjective morphism $\sigma:S\to T$, where $S$
is smooth, and set $\psi:=\varphi\circ\sigma$; assume that assertions
(a) and (b) hold on $S$. Then  Remark \ref{rem:DEGDESIGN} implies (b)
holds on $T$  and that remark together with Lemma \ref{lem:DEGDESIGN}
imply $U_{\psi}=\sigma^{-1}(U_{\varphi})$. Since $\sigma$ is proper
and surjective it follows that $\sigma$ is an open morphism; hence
$\sigma(U_{\psi})=U_{\varphi}$ is a nonempty open subset of $T$ which
proves that (a) also holds on $T$.  

Now assume $T$ is smooth.   
In order to prove that $U_\varphi$ is not empty we consider a writing
$w_V^\varphi$ such that $\deg 
(w_V^\varphi)=\Deg(\varphi)$. By  Lemma \ref{lem4.1}, it suffices to
prove that  $V\backslash V_n\neq\emptyset$. Assume that $V_{n}=V$ and 
consider  the 
variety $X^w\subset V\times 
\A^{n+1}$ defined by the ideal $I(w)$ generated by the components of $w$. Since
$V_{n+1}\subsetneq V$, it follows that $X^w$ has
codimension 1; denote by $Z$ the union of codimension 1 irreducible
components of $X^w$ which project onto $V$. If $t_0\in V\setminus V_{n+1}$, then the ideal
$I(Z)_{t_0}\subset \calo_{V,t_0}[\x]=\calo_{T,t_0}[\x]$ of elements in
$\calo_{V,t_0}[\x]$ vanishing in a neighborhood of $(\{t_0\}\times
\A^{n+1})\cap Z$ is principal; let $g\in \calo_{V,t_0}[\x]$ be a
polynomial,  homogeneous  in 
$x_0,\ldots,x_n$, which generates $I(Z)_{t_0}$. Hence there exist a
positive integer $\ell$, an index $0\leq j\leq n$ and homogeneous
polynomials  $h_0,\ldots,h_n \in \calo_{V,t_0}[\x]$ such that
$f_i=g^{\ell}h_i$, for all $i=0,\ldots,n$, and $h_j\not\in
I(Z)_{t_0}$.  

There exists an affine open neighborhood  $V'$ of $t_0$ in $V\setminus V_{n+1}$ such
that $f_i,g,h_i\in \Bbbk[V']\otimes\Bbbk[\x]$. Then
$w^{\varphi}_{V'}:=(h_0,\ldots,h_n)$ defines a writing of $\varphi$ on
$V'$ through $t_0$, with $\deg(w^{\varphi}_{V'})< \deg (w^\varphi_V)=\Deg(\varphi)$, and we
obtain a contradiction.

In order to prove that $U_\varphi$ is open, let $t_0\in U_\varphi$ and
consider a writing $w'=w_U^\varphi=(f_0',\dots, f_n')$ passing through
$t_0$. 

If $U\setminus U_n\neq \emptyset$ then $A= (V\setminus V_n)\cap
(U\setminus U_n)\neq \emptyset$ and it follows from Lemma \ref{lem4.1}
that  for all $t\in
A$
\[
\deg(w')=\deg(\varphi_t)=\deg(w)=\Deg(\varphi)=\deg(\varphi_{t_0}).
\]
Hence $t_0\in U\setminus U_n\subset U_\varphi$. 

If  $U=U_n$, by arguing
as in the preceding part of the proof we deduce the existence of an
affine open neighborhood 
$U'\subset U\setminus U_{n+1}$ of $t_0$ and a writing $w^\varphi_{U'}=(h_0',\dots , h_n')$,
with $f_i={g'}^{\ell'}h'_i$ for some $g', h_i'\in \Bbbk[U']\otimes
\Bbbk[\x]$. Since  $h'_j$ does not belong to $I(Z')_{t_0} $ (obvious notations), Lemma
\ref{lem4.1}(c) implies
$\deg(w^{\varphi}_{U'})\leq\deg(\varphi_{t_0})$, and thus
$\deg(w^{\varphi}_{U'})=\Deg(\varphi)$. Hence $t_0\in U'\setminus
U'_{n}\subset U_\varphi$ which completes the proof of $(a)$.

In order to prove that $U_\varphi$ is open, let $t_0\in U_\varphi$ and
consider a writing $w_U^\varphi=(f_0',,\dots f_n')$ passing through
$t_0$. If $t_0\in U\setminus U_n$ there is noting to prove. Otherwise
$\deg(w_U^\varphi)>\deg(\varphi_{t_0})=\Deg(\varphi)$, hence $U=U_n$. By arguing as in the preceding part of the proof we deduce the existence of an affine open neighborhood
$U'\subset U\setminus U_{n+1}$ of $t_0$ and a writing $w^\varphi_{U'}=(h_0',\dots , h_n')$,
with $f_i={g'}^{\ell'}h'_i$ for some $g', h_i'\in \Bbbk[U']\otimes
\Bbbk[\x]$. Since  $h'_j$ does not belong to $I(Z')_{t_0} $ (obvious notations), Lemma
\ref{lem4.1}(c) implies
$\deg(w^{\varphi}_{U'})\leq\deg(\varphi_{t_0})$, and thus
$\deg(w^{\varphi}_{U'})=\Deg(\varphi)$. Hence $t_0\in U'\setminus
U'_{n}\subset U_\varphi$ which completes the proof of $(a)$.

To prove (b) we consider a writing $w=w_V^\varphi=(g_0,\dots,g_n)$
such that 
$\deg(w)=\Deg(\varphi)$. Since $g_i\in\Bbbk[V]\otimes \Bbbk[\x]\subset
\Bbbk(T)[\x]$ for all $i$,  there 
exists $a\in\Bbbk[T]$ such that  $ag_i\in\Bbbk[T]\otimes \Bbbk[\x]$ for
$i=1,\ldots,n$. Write 
\[
ag_i=\sum_{I\in\cali} a^i_I {\x}^I,\ \cali=\bigl\{I=(i_0,\ldots,i_n);
i_0+\cdots+i_n=\Deg(\varphi)\bigr\}, a^i_I\in\Bbbk[T],
\]
for $i=0,\ldots,n$.
  
If $t\in T$ we take an irreducible smooth curve $C\subset T$ 
passing through $t$ such that $C\cap U_\varphi\neq
\emptyset$. If $\alpha$ is a local parameter for the local ring
$\calo_{C,t}$ of $C$ at $t$, there exists $m$ such that $\alpha^m$
does divide the restriction of $a^i_I$ to $C$, for all $I$ and all $i$, but  $\alpha^{m+1}$ does
not; set 
\[g'_i:=\sum_{I\in\cali} b^i_I {\x}^I,\]
where $b_I^i:=(a^i_I|_{C})/\alpha^m\in\calo_{C,t}$, $i=0,\ldots,n$. By construction $(g'_0,\ldots,g'_n)$ defines a writing of the morphism $\varphi|_{C}:C\to\bir(\PP^n)$ on an open neighborhood  of $t$ in $C$. It follows $\deg(\varphi_{t})\leq \deg(g'_i)=\deg(g_i)=\Deg(\varphi)$.   
\end{proof}

As a consequence of (the proof of) Proposition \ref{pro4.1} we have
the following:

\begin{cor}\label{corowritngdeg}
Let $\varphi:T\to \birn$ be  a morphism, then:

\noindent $(a)$ $\Deg(\varphi)=\max \bigl\{
\deg(\varphi_t)\mathrel{:} t\in T\bigr\}$. Moreover, a
writing $w_V^\varphi$ is of minimum degree,  that is  
$\deg(w^\varphi_V)=\Deg(\varphi)$, if and only if $V\setminus V_n\neq
\emptyset$.\hfill 

\noindent $(b)$ If $t\in T$ is such that $\deg(\varphi_t)=\Deg (\varphi
)$, then there exists a writing $w=w^\varphi_V$ through $t$, with
$\deg(w)=\Deg(\varphi)$.   
\qed  
\end{cor}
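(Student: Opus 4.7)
The plan is to derive both assertions directly from Proposition \ref{pro4.1} and Lemma \ref{lem4.1}, recycling the factorization construction carried out in the proof of the former.

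The equality $\Deg(\varphi)=\max\{\deg(\varphi_t):t\in T\}$ is immediate: Proposition \ref{pro4.1}(b) gives the upper bound $\deg(\varphi_t)\leq \Deg(\varphi)$ for every $t$, while Proposition \ref{pro4.1}(a) shows the bound is attained on the nonempty open set $U_\varphi$. For the \emph{moreover} statement in (a), first assume $V\setminus V_n\neq\emptyset$ and pick $t\in V\setminus V_n\subset V\setminus V_{n+1}$ (using $V_{n+1}\subset V_n$). The equivalence (a)$\Leftrightarrow$(c) of Lemma \ref{lem4.1} gives $\deg(\varphi_t)=\deg(w_V^\varphi)$, and combining this with Proposition \ref{pro4.1}(b) and with the fact that $\Deg(\varphi)$ is the minimum over all writings yields
\[
\deg(w_V^\varphi)=\deg(\varphi_t)\leq \Deg(\varphi)\leq \deg(w_V^\varphi),
\]
so equality holds throughout. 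Conversely, if $V=V_n$, then the factorization argument in the proof of Proposition \ref{pro4.1}(a) --- extracting the local generator $g$ of $I(Z)_{t_0}$ at some $t_0\in V\setminus V_{n+1}$ and writing $f_i=g^\ell h_i$ --- produces a writing on an affine open neighborhood of $t_0$ whose degree is strictly less than $\deg(w_V^\varphi)$, contradicting the minimality $\deg(w_V^\varphi)=\Deg(\varphi)$.

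For (b), fix $t\in T$ with $\deg(\varphi_t)=\Deg(\varphi)$ and choose any writing $w_U^\varphi$ through $t$. If $t\in U\setminus U_n$, Lemma \ref{lem4.1} gives $\deg(w_U^\varphi)=\deg(\varphi_t)=\Deg(\varphi)$ and we are done. Otherwise $t\in U_n\setminus U_{n+1}$; applying the factorization procedure of Proposition \ref{pro4.1}(a) at $t$ produces, on a suitable affine open neighborhood $U'\subset U\setminus U_{n+1}$ of $t$, a writing $w^\varphi_{U'}=(h_0,\dots,h_n)$ through $t$ with $h_j\not\in I(Z')_t$ for some index $j$; via Lemma \ref{lem4.1} the latter condition translates into $t\in U'\setminus U'_n$, and the characterization established in (a) yields $\deg(w^\varphi_{U'})=\Deg(\varphi)$.

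I do not expect any serious obstacle: the whole argument is bookkeeping on top of results already proved, the only mildly delicate point being the verification that the local factorization $f_i=g^\ell h_i$ obtained in $\calo_{V,t}[\x]$ extends to a regular identity on an affine open neighborhood of $t$ still contained in $V\setminus V_{n+1}$, which is handled exactly as in the proof of Proposition \ref{pro4.1}(a).
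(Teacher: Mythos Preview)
Your proposal is correct and follows exactly the approach implicit in the paper, which simply records the corollary as an immediate consequence of the proof of Proposition~\ref{pro4.1} and marks it with a $\qed$. You have faithfully unpacked the bookkeeping the paper leaves to the reader: the sandwich argument for the max in (a), the use of Lemma~\ref{lem4.1} for the equivalence with $V\setminus V_n\neq\emptyset$, and the reuse of the local factorization $f_i=g^\ell h_i$ from the proof of Proposition~\ref{pro4.1}(a) for both the converse in (a) and the construction in (b). One minor remark: in (b), when $t\in U_n\setminus U_{n+1}$, the paper's own argument first observes that $\deg(w^\varphi_U)>\Deg(\varphi)$ forces $U=U_n$ (otherwise any $t'\in U\setminus U_n$ would give $\deg(\varphi_{t'})=\deg(w^\varphi_U)>\Deg(\varphi)$, contradicting part (b) of Proposition~\ref{pro4.1}); you apply the factorization directly without stating this, which is harmless since the construction is local around $t$ anyway.
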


Clearly the function $t\mapsto\deg(\varphi_t)$ takes finitely many
values, say $d_1=\Deg(\varphi)>d_2>\cdots>d_\ell\geq 1$. Consider the
decomposition $T\backslash U_\varphi=X_1\cup\cdots\cup X_r$ in
irreducible components. We may restrict $\varphi$ to each $X_i$  and
apply Proposition \ref{pro4.1} to conclude $\deg(\varphi_t)=d_2$ for
$t$ in an open set (possibly empty for some $i$) $U_i\subset X_i$ and $\deg(\varphi_t)<d_2$ on $X_i\backslash
U_i$, $i=1,\ldots,r$. Repeating  the argument with $d_3$, and so
on, we deduce: 

\begin{thm}\label{thm4.2}
Let $\varphi:T\to \bir(\PP^n)$ be a morphism.   Then 

(a) There exists a stratification by locally closed sets $T=\cup_{j=1}^\ell V_j$ such that $\deg(\varphi_t)$ is constant on $V_j$, for all $j=1,\ldots,\ell$. 

(b) The function $\deg{\scriptstyle \circ}\varphi:T\to \N$,  $t\mapsto
\deg(\varphi_t)$, is lower-semicontinuous.   \hfill \qed
\end{thm}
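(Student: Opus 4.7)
The plan is to iterate Proposition \ref{pro4.1} on a decreasing sequence of closed subvarieties of $T$, essentially formalizing the inductive argument sketched in the paragraph preceding the theorem. Since $\deg(\varphi_t)\leq \Deg(\varphi)$ for every $t$ by Proposition \ref{pro4.1}(b), the function $\deg\circ\varphi$ takes only finitely many values $d_1=\Deg(\varphi)>d_2>\cdots>d_\ell\geq 1$.

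First, Proposition \ref{pro4.1}(a) gives that $V_1:=\{t:\deg(\varphi_t)=d_1\}=U_\varphi$ is open, so $Z_1:=T\setminus V_1$ is closed. Next, I would decompose $Z_1$ into its irreducible components $X_1,\ldots,X_{r_1}$ and, for each $X_i$, verify that the restricted rational map $X_i\times\PP^n\tor X_i\times\PP^n$ is a pseudo-automorphism over $X_i$; this uses the density of each fiber $U_t$ in $\{t\}\times\PP^n$ to force the needed density of $U\cap(X_i\times\PP^n)$ in $X_i\times\PP^n$. Applying Proposition \ref{pro4.1} to the morphism $\varphi|_{X_i}:X_i\to\birn$ then yields an open (possibly empty) subset $W_i\subset X_i$ on which $\deg(\varphi_t)$ attains its maximum $D_i:=\Deg(\varphi|_{X_i})\leq d_2$, with $\deg(\varphi_t)<D_i$ on the closed complement $X_i\setminus W_i$. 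Consequently the stratum $V_2:=\{t:\deg(\varphi_t)=d_2\}$ equals $\bigcup_{i\,:\,D_i=d_2}W_i$, and its complement in $Z_1$ is the finite union of the closed sets $X_i\setminus W_i$ (for $D_i=d_2$) together with the $X_i$ for which $D_i<d_2$; this complement is therefore closed in $T$, and so is $Z_2:=Z_1\setminus V_2$.

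Iterating the same procedure on the irreducible components of $Z_2$, and so on, the recursion terminates after $\ell$ steps because the degree is bounded below by $1$. This exhibits $V_j:=\{t:\deg(\varphi_t)=d_j\}=Z_{j-1}\setminus Z_j$ as locally closed, yielding the stratification of (a), and shows that each $Z_j=\{t:\deg(\varphi_t)\leq d_{j+1}\}$ is closed in $T$. Since every level set $\{t:\deg(\varphi_t)\leq c\}$ with $c\in\N$ either coincides with some $Z_j$ or is empty or all of $T$, the function $\deg\circ\varphi$ is lower-semicontinuous, proving (b). The main technical obstacle in this plan is the verification, inside the inductive step, that the restriction of a morphism to a closed irreducible subvariety remains a morphism in the sense of Definition \ref{defi_pseudo}; this is what legitimates the recursive application of Proposition \ref{pro4.1}.
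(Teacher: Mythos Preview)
Your proposal is correct and follows exactly the paper's own argument: the proof in the paper consists of the paragraph immediately preceding Theorem \ref{thm4.2}, which iterates Proposition \ref{pro4.1} over the irreducible components of $T\setminus U_\varphi$, then of the resulting closed complements, and so on. You have in fact been more careful than the paper in isolating the one point that needs checking---namely that the restriction of $\varphi$ to a closed irreducible subvariety is again a morphism in the sense of Definition \ref{defi_pseudo}---which the paper takes for granted.
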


\begin{cor}\label{cor:degeneration}
If $d, e\in\Z$ are positive integers numbers with $d\leq e$, then every Cremona transformation of degree $d$ is specialization of
Cremona transformations of degrees $\geq e$.  
\end{cor}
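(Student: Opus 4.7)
The plan is to combine Blanc's connectedness theorem (Theorem \ref{bla2}) with the lower-semicontinuity of the degree just established (Theorem \ref{thm4.2}(b)). Given $f\in\birn$ of degree $d\leq e$, I aim to produce a morphism $\theta:U\to\birn$ from an irreducible curve $U$ together with a point $t_0\in U$ such that $\theta(t_0)=f$ and $\deg(\theta_t)\geq e$ for $t$ in a dense open subset of $U$; this is exactly the meaning of the desired specialization statement.

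First, I would fix any $g\in\birn$ with $\deg(g)\geq e$. Such a $g$ exists in every dimension: for instance, iterating the standard Cremona involution and conjugating by generic elements of $\pgl(n+1,\Bbbk)$ produces birational maps of unbounded degree, so one can just pick one of degree at least $e$. Then Theorem \ref{bla2} supplies an open neighborhood $U\subset\A^1$ of $\{0,1\}$ and a morphism $\theta:U\to\birn$ satisfying $\theta(0)=f$ and $\theta(1)=g$.

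Next I would invoke Theorem \ref{thm4.2}(b): the function $t\mapsto\deg(\theta_t)$ is lower-semicontinuous on $U$, so the set
\[
W:=\bigl\{t\in U \mathrel{:} \deg(\theta_t)\geq e\bigr\}
\]
is open in $U$. Since $1\in W$, the set $W$ is a nonempty open subset of the irreducible curve $U$, hence is dense in $U$. Therefore $f=\theta(0)$ lies in the closure of the family $\{\theta_t\}_{t\in W}$, all of whose members are Cremona transformations of degree at least $e$, which is exactly the asserted specialization.

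There is no real obstacle: once Theorems \ref{bla2} and \ref{thm4.2}(b) are available, the corollary is a formal two-line consequence, the only external input being the (elementary) existence of a Cremona transformation of degree at least $e$ in $\PP^n$.
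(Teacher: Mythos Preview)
Your proposal is correct and follows essentially the same route as the paper: invoke Theorem~\ref{bla2} to connect $f$ to a transformation of large degree by a one-parameter family, then use the lower-semicontinuity of the degree to conclude. The only cosmetic difference is that the paper cites Proposition~\ref{pro4.1} directly (taking $g$ of degree exactly $e$) whereas you cite its consequence Theorem~\ref{thm4.2}(b) (taking $g$ of degree $\geq e$); both amount to the same argument.
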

\begin{proof}
Let $f$ be a Cremona transformation of degree $d$. Consider a morphism
$\theta:T\to \bir(\PP^n)$, where $T$ is a dense open set in $\A^1$
containing $0,1$ such that $\theta(0)=f$ and $\theta(1)$ is a Cremona transformation of degree $e$ (Theorem \ref{bla2}). The proof follows from Proposition \ref{pro4.1} applied to the morphism $\theta$.     
\end{proof}

\begin{cor}\label{cor4.3}
The degree function $\deg:\bir(\PP^n)\to\N$ is  lower-semicontinuous,
i.e. for all $d$ the subset $\bir_{\leq d}(\PP^n)$ of birational maps
of degree $\leq d$ is closed. In particular, a subset
$\calf\subset\bir(\PP^n)$ is closed if and only if  $\calf\cap
\bir(\P^n)_{\leq d}$ is closed for all $d>0$. 
\end{cor}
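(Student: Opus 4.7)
The proposal is to deduce the corollary directly from the two main technical results of the subsection, namely Theorem~\ref{thm4.2}(b) and the bound $\deg(\varphi_t)\le \Deg(\varphi)$ from Proposition~\ref{pro4.1}(b).

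For the first assertion, I would unravel the definition of the Zariski topology. To show $\bir_{\le d}(\PP^n)$ is closed, I must verify that for every morphism $\varphi:T\to \bir(\PP^n)$ the preimage $\varphi^{-1}\bigl(\bir_{\le d}(\PP^n)\bigr)=\{t\in T:\deg(\varphi_t)\le d\}$ is closed in $T$. But this is precisely the lower-semicontinuity statement of Theorem~\ref{thm4.2}(b), so nothing more is needed.

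For the ``in particular'' part, one direction is immediate: if $\calf$ is closed, then $\calf\cap \bir_{\le d}(\PP^n)$ is the intersection of two closed subsets and hence closed. For the converse, suppose $\calf\cap\bir_{\le d}(\PP^n)$ is closed for every $d>0$, and let $\varphi:T\to\bir(\PP^n)$ be a morphism. The key observation is that by Proposition~\ref{pro4.1}(b) the image of $\varphi$ is contained in $\bir_{\le d_0}(\PP^n)$ where $d_0:=\Deg(\varphi)$, so
\[
\varphi^{-1}(\calf)=\varphi^{-1}\bigl(\calf\cap\bir_{\le d_0}(\PP^n)\bigr).
\]
Since $\calf\cap\bir_{\le d_0}(\PP^n)$ is closed in $\bir(\PP^n)$ by hypothesis, its preimage under the morphism $\varphi$ is closed in $T$, and hence $\calf$ is closed.

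There is no real obstacle here, since both halves of the corollary are essentially formal consequences of previously established statements; the mild subtlety is only the realization that the boundedness of $\deg(\varphi_t)$ on the source of any algebraic family provided by Proposition~\ref{pro4.1}(b) is exactly what reduces an arbitrary closedness test to a test within a single $\bir_{\le d}(\PP^n)$.
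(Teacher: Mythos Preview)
Your argument is correct and follows the same route as the paper: the lower-semicontinuity is read off directly from Theorem~\ref{thm4.2}(b), and the ``in particular'' is obtained by noting (via Proposition~\ref{pro4.1}(b)) that any morphism $\varphi:T\to\birn$ has image in some $\bir_{\leq e}(\PP^n)$, so $\varphi^{-1}(\calf)=\varphi^{-1}\bigl(\calf\cap\bir_{\leq e}(\PP^n)\bigr)$. Your write-up is in fact a bit more explicit than the paper's, which leaves the trivial direction and the citation of Proposition~\ref{pro4.1}(b) implicit.
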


\begin{proof}
The
assertion relative to semicontinuity is a direct consequence of
Theorem \ref{thm4.2}(b). For the last assertion we note
that if $\varphi:T\to\bir(\PP^n)$ is a morphism and  $e=\Deg(\varphi)$, then
$\varphi^{-1}(\calf)=\varphi^{-1}\bigl(\calf\cap \bir(\P^n)_{\leq e}\bigr)$.  
\end{proof}

\begin{rem}\label{rem4.4}
Note that  $\bir(\PP^n)=\bigcup_{d\geq 1} \bir(\PP^n)_{\leq d}$, with $\bir(\PP^n)_{\leq d}\subsetneq \bir(\PP^n)_{\leq d+1}$ and $\bir(\PP^n)_1=\pgl(n+1,\Bbbk)$.
\end{rem}

% \begin{cor}
% A subset $\calf\subset\cren$ is closed if and only if $\calf\cap\cren_{\leq d}$ is closed for all $d>0$.
% \end{cor}

% \begin{proof}
% For a morphism $\varphi:T\to\bir(\PP^n)$ with $d=\Deg(\varphi)$ we have $\varphi^{-1}(\calf)=\calf\cap\bir(\PP^n)_{\leq d}$.  
% \end{proof}

\subsection{Algebraization of morphisms}\ %

In this paragraph we deal with the morphisms $\varphi:T\to \birn$ and
their relationship with the stratification described in Theorem
\ref{thm4.2}. We consider the locally closed sets
$\birn_d:=\birn_{\leq d}\backslash \birn_{\leq d-1}$, where $d\geq
2$. If  $\Deg(\varphi)=d$, then
$U_\varphi=\varphi^{-1}(\birn_{d})$.  

Nguyen has shown  in his doctoral thesis (\cite{Ngu}) that $\birn_d$
(with the induced Zariski topology)
supports  a structure of algebraic variety
 (see also  \cite[Prop.2.15]{BlFu}). We give here some details on this
 construction, as a preliminary result for Theorem \ref{thm:chevalley}.

 For integers $d,n,r$, with $d,n>0$ and $r\geq 0$, we consider the
 vector space $V=\Bbbk[x_0,\ldots, x_n]_d^{r+1}$ of $(r+1)$-uples of
 $d$-forms.  Notice that the projective space $\P_{(d,n,r)}=\P(V)$ consisting of
 dimension 1 subspaces in $V$ has dimension $N(d,n,r)={n+d\choose
   d}(r+1)-1$.  

% For $r=0$ and $d=e$ this construction gives a projective space
% $\P^m$ with $m={n+e\choose e}-1$ and replacing $d$ with $d-e$, when
% $e<d$, it gives a projective space $\P^M$ where $M=N(d-e,
% n)={n+d-e\choose d-e}(n+1)-1$.  

The following lemma shows how to identify $\birn_d$ with a locally
closed subset of $\P_{(d,n,n)}$.  In particular,  $\birn_{d}$ is a
quasi-projective variety and  $\birn_{\leq d}$
is a finite union of quasi-projective varieties. The reader should be
aware that the topology induced by  $\birn$ on  $\birn_{\leq d}$ is
not the one given by this union.% see Theorem \ref{thmbirndtop}

\begin{lem}\label{birndtopo}
There exists a canonical bijection between $\birn_{d}$  and a locally
closed subset of $\P_{(d,n,n)}$. In particular,  $\birn_{d}$ is a
quasi-projective variety.
\end{lem}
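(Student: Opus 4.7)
The plan is to construct an injective map $\Phi\colon \birn_d \to \P_{(d,n,n)}$ and identify its image as a locally closed subset. For $f \in \birn_d$, since $\deg(f) = d$, one may write $f = (f_0:\cdots:f_n)$ with $f_i \in \Bbbk[x_0,\ldots,x_n]$ homogeneous of degree $d$ and without common factor of positive degree; any two such $(n+1)$-uples differ by a common nonzero scalar in $\Bbbk^{*}$, so $\Phi(f):=[f_0:\cdots:f_n]\in \P_{(d,n,n)}$ is a well-defined canonical element. Injectivity of $\Phi$ is immediate, since the projective tuple recovers $f$ as a rational map.

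Setting $W_d := \Phi(\birn_d)$, I would write $W_d = U \cap B$, where $U\subset \P_{(d,n,n)}$ is the locus of $[f_0:\cdots:f_n]$ with $\gcd(f_0,\ldots,f_n) = 1$, and $B\subset\P_{(d,n,n)}$ is the locus whose underlying rational map (after removing common factors) is birational. The set $U$ is open: its complement is the finite union, for $1\le e\le d$, of the images of the proper morphisms
\[
\P(\Bbbk[x]_e) \times \P\bigl(\Bbbk[x]_{d-e}^{n+1}\bigr) \longrightarrow \P_{(d,n,n)}, \qquad \bigl([g],[h_0:\cdots:h_n]\bigr) \longmapsto [gh_0:\cdots:gh_n],
\]
each closed because the source is projective.

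For the closedness of $B$, I would invoke the classical Bezout-type bound that the inverse of a birational self-map of $\PP^n$ of degree $\le d$ has degree $\le E := d^{n-1}$. For each $1 \le e \le E$, the locus
\[
Z_e := \Bigl\{ ([f],[g]) \in \P_{(d,n,n)}\times \P_{(e,n,n)} \mathrel{:} x_j\, f_i(g_0,\ldots,g_n) - x_i\, f_j(g_0,\ldots,g_n) = 0 \ \forall\, i,j\Bigr\}
\]
is closed, since the defining equations are bihomogeneous in the coefficients of $f$ and $g$ and so descend to the product of projective spaces. As $\P_{(e,n,n)}$ is projective, the projection $\pi_1\colon Z_e\to \P_{(d,n,n)}$ is closed, and its image consists of those $[f]$ admitting a right inverse of degree $\le e$ as rational maps. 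One then identifies $B$ with $\bigcup_{e=1}^{E}\pi_1(Z_e)$, so that $B$ is closed and $W_d = U\cap B$ is locally closed; transporting the resulting quasi-projective structure via the bijection $\Phi$ endows $\birn_d$ with the desired variety structure.

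The hardest step to pin down is the identification of $B$ with $\bigcup_{e=1}^{E}\pi_1(Z_e)$: the set $Z_e$ may contain degenerate pairs, for instance where $f_i(g_0,\ldots,g_n) \equiv 0$ for all $i$, which need not correspond to a genuine right inverse of $\phi$. One must verify that such spurious components either do not contribute points outside $B$ or can be removed without breaking closedness, which requires careful bookkeeping of the indeterminacy loci of $\phi$ and $\psi$ and of the common factors that may appear in the composition. This technical verification, together with a more conceptual construction avoiding the degeneracy issue, is carried out in \cite{Ngu} and \cite[Prop.~2.15]{BlFu}, to which I would refer for the explicit details.
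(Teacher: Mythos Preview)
Your construction of the injection $\Phi$ and of the open set $U$ (via the Segre-type morphisms) is exactly what the paper does: the paper calls the image of your multiplication map $\calw_e$ and sets $\calu_d=\bigl[\P_{(d,n,n)}\setminus\bigcup_{e=1}^{d-1}\calw_e\bigr]\cap\calu$. The difference lies in how the ``birational'' condition is shown to cut out a closed set inside this open locus.

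The paper does \emph{not} try to encode birationality by the existence of a bounded-degree inverse. Instead it first intersects with the open set $\calu$ where the Jacobian determinant $\partial(f_0,\dots,f_n)/\partial(x_0,\dots,x_n)$ is not identically zero, so that every point of $\calu_d$ already represents a \emph{dominant} rational self-map of $\PP^n$ of degree exactly $d$. It then invokes \cite[Annexe~B, Pro.~B]{RPV} to conclude that, among such dominant maps, the birational ones form a closed subset. This sidesteps entirely the degeneracy problem you flag: once $f$ is known to be dominant, one is working inside a well-behaved parameter space where the cited result applies directly, and no incidence variety $Z_e$ is needed.

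Your route via $Z_e$ is not wrong in spirit, but the gap you identify is genuine and not merely cosmetic. The equations $x_jf_i(g)=x_if_j(g)$ are satisfied whenever the image of the map defined by $g$ lies in the base locus of $f$; for instance $f=(x_0^2:x_0x_1:x_1^2)$ in $\PP^2$ is not even dominant, yet $(f,g)$ with $g=(0:0:x_2)$ lies in your $Z_1$, so $\bigcup_e\pi_1(Z_e)$ strictly contains $B$. Repairing this requires precisely the kind of bookkeeping you defer to \cite{Ngu} and \cite{BlFu}. The paper's Jacobian trick is the cleaner way to avoid it: adding the dominance condition is an easy open constraint, and it reduces the hard step to a single clean citation rather than an ad hoc excision of degenerate loci.
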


\begin{proof}
Let $e<d$ be a non negative integer number. Consider the projective
spaces  $\P_{(d,n,n)}$,
$\P_{(d-e,n,n)}$ and $\P_{(e,n,0)}$.  Then there exists a  ``Segre
type'' morphism $s:\P_{(d-e,n,n)}\times \P_{(e,n,0)}\to \P_{(d,n,n)}$ which to a pair of
elements $(g_0:\cdots:g_n)\in \P_{(d-e,n,n)}$, $(f)\in \P_{(e,n,0)}$ it associates
$(g_0f:\cdots :g_nf)$. We denote by $\calw_e\subset \P_{(d,n,n)}$ the image of
$s$, which is a projective subvariety.

% Fix a basis for $\Bbbk[x_0,\ldots, x_n]_d$ given by monomials; it
% induces a basis for $\Bbbk[x_0,\ldots, x_n]_d^{n+1}$. If $p\in \P^N$
% is the point defined by a dimension 1 subspace generated by an element
% $(f_0,f_1,\ldots, f_n)\in \Bbbk[x_0,\ldots,
% x_n]_d^{n+1}-\{(0,\ldots,0)\}$, the homogeneous coordinates of $p$
% relative to that basis are the coefficients of the $f_i'$s ordered in
% accordance with the base order; we denote $p=(f_0:f_1:\cdots:f_n)$.      

Now consider the open set $\calu\subset\P_{(d,n,n)}$ consisting of points
$(f_0:f_1:\cdots:f_n)$ where the Jacobian determinant $\partial
(f_0,f_1,\ldots, f_n)/\partial (x_0,\ldots, x_n)$ is not identically
zero. Clearly, an element $(f_0:f_1:\cdots:f_n)\in \P_{(d,n,n)}\cap \calu$ can
be identified with a dominant rational map $\P^n\to \P^n$ defined by
homogeneous polynomials (without common factors) of degree $\leq
d$, and any such dominant rational map can be described in this way. Under this identification, points  in
$\calu_d:=\left[\P_{(d,n,n)}\backslash\left(\cup_{e=1}^{d-1}
    \calw_e\right)\right]\cap \calu$ are in one-to-one correspondence
with dominant rational 
maps defined by polynomials of degree exactly $d$.   

As it follows readily from \cite[Annexe B, Pro. B]{RPV}, the
(bijective) image of 
$\birn_d$ under the correspondence above is  closed  in $\calu_d$. Hence it is a
quasi-projective variety.
\end{proof}

The topology given by the preceding construction coincides with the
Zariski topology, inducing a structure of algebraic variety on $\birn_d$:

\begin{thm}[Blanc and Furter]\label{thm4.4}
 Let $\varphi:T\to \birn$ be a morphism with $d=\Deg(\varphi)$, and let $U_\varphi$ be as
in Proposition \ref{pro4.1}. Then we have:

\noindent
(a) the induced map
$U_\varphi\to  \birn_{d}$ is a morphism of algebraic varieties.

\noindent
(b)  the topology on $\birn_d$ induced by $\birn$ coincides with  the
topology of $\birn_d$ induced by $\P_{(d,n,n)}$
 as in Lemma \ref{birndtopo}. 
\end{thm}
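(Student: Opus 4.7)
For part $(a)$ the idea is to produce the morphism locally from writings of minimum degree and then glue. Fix $t_0\in U_\varphi$. By Corollary~\ref{corowritngdeg}(b) there is a writing $w=w^\varphi_V=(f_0,\dots,f_n)$ of $\varphi$ through $t_0$ with $\deg(w)=d=\Deg(\varphi)$, and the equality $\deg(\varphi_{t_0})=d$ places $t_0$ in $V\setminus V_n$ by Lemma~\ref{lem4.1}. On the open neighborhood $V\setminus V_n$ of $t_0$ the specialization $t\mapsto\bigl(f_0(t,\cdot):\cdots:f_n(t,\cdot)\bigr)$ yields $(n+1)$-tuples of degree exactly $d$ without a common factor, hence lies in the locally closed subvariety $\birn_d\subset\P_{(d,n,n)}$ described in Lemma~\ref{birndtopo}; since the $f_i$ depend polynomially on $t$, this is visibly a morphism of algebraic varieties into $\birn_d$. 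Two such local morphisms, attached to different writings through $t_0$, represent the same birational map $\varphi_t$ and hence the same point of $\birn_d$, so they agree on overlaps and glue into a single morphism of varieties $U_\varphi\to\birn_d$.

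For part $(b)$, denote by $\tau_1$ the topology induced on $\birn_d$ from $\birn$ and by $\tau_2$ the topology of Lemma~\ref{birndtopo}; I will prove $\tau_1=\tau_2$ by establishing both inclusions.

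For $\tau_2\subset\tau_1$, let $F\subset\birn_d$ be $\tau_2$-closed and set $C:=F\cup\birn_{\leq d-1}$, so that $F=C\cap\birn_d$; the task is to show $C$ is closed in $\birn$, i.e.\ that $\varphi^{-1}(C)$ is closed in $T$ for every morphism $\varphi:T\to\birn$. If $\Deg(\varphi)\leq d-1$ then $\varphi(T)\subset C$, giving the trivial case. If $\Deg(\varphi)=d$, then $T\setminus U_\varphi=\varphi^{-1}(\birn_{\leq d-1})$ is closed by Corollary~\ref{cor4.3} and $\varphi^{-1}(F)$ is closed in $U_\varphi$ by part $(a)$; the set-theoretic identity $\varphi^{-1}(C)=\overline{\varphi^{-1}(F)}\cup(T\setminus U_\varphi)$ (which follows from $\overline{\varphi^{-1}(F)}\cap U_\varphi=\varphi^{-1}(F)$) then exhibits $\varphi^{-1}(C)$ as a finite union of closed sets. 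The remaining case $\Deg(\varphi)>d$ is reduced by restricting the pseudo-automorphism underlying $\varphi$ to each irreducible component of the closed subvariety $\varphi^{-1}(\birn_{\leq d})\subsetneq T$: on each component one obtains a morphism to $\birn$ of degree at most $d$, and the previous two cases apply.

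For $\tau_1\subset\tau_2$, suppose $F=C\cap\birn_d$ with $C$ closed in $\birn$. To see that $F$ is $\tau_2$-closed I test it against an arbitrary morphism of algebraic varieties $\sigma:S\to\birn_d$. By the very construction of Lemma~\ref{birndtopo}, $\sigma$ is locally given by $(n+1)$-tuples of degree-$d$ forms in $\Bbbk[S]\otimes\Bbbk[\x]$ whose specializations have no common factor, so it assembles into a pseudo-automorphism of $S\times\P^n$ and thus into a morphism $\psi:S\to\birn$ with image in $\birn_d$ whose set-theoretic composition with the inclusion $\birn_d\hookrightarrow\birn$ is exactly $\sigma$. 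Then $\sigma^{-1}(F)=\psi^{-1}(C)$ is closed in $S$ by the definition of the Zariski topology on $\birn$, as required.

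The main obstacle is the $\Deg(\varphi)>d$ case in the $\tau_2\subset\tau_1$ direction: one must verify that the restriction of the pseudo-automorphism underlying $\varphi$ to an irreducible component of $\varphi^{-1}(\birn_{\leq d})\times\P^n$ still satisfies conditions $(a)$--$(c)$ of Definition~\ref{defi_pseudo}, so that part $(a)$ becomes applicable to this restricted family. Once this technicality is settled, the rest of the proof is a straightforward combination of Proposition~\ref{pro4.1}, Corollary~\ref{cor4.3} and part $(a)$ of the present theorem.
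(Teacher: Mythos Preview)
The paper does not actually prove Theorem~\ref{thm4.4}: it is stated with attribution to Blanc and Furter \cite{BlFu} and followed immediately by a bare \qed, so there is no argument in the paper to compare your proposal against.

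That said, your plan is essentially the right one and matches the approach in \cite{BlFu}. A few comments on the details. For part~(a) your local construction via Corollary~\ref{corowritngdeg}(b) and Lemma~\ref{lem4.1} is correct; the gluing works because two writings of degree $d$ representing the same $\varphi_t$ with $\deg(\varphi_t)=d$ differ by a scalar in $\Bbbk(T)^*$, hence give the same point of $\P_{(d,n,n)}$. For the direction $\tau_1\subset\tau_2$ in part~(b), your phrasing ``test against an arbitrary morphism $\sigma:S\to\birn_d$'' is slightly off: closedness in a quasi-projective variety is not \emph{defined} that way. What you really need is the single morphism $\sigma=\mathrm{id}:\birn_d\to\birn_d$ (or its restriction to an affine open cover), which you correctly observe assembles into a pseudo-automorphism and hence a morphism $\psi:\birn_d\to\birn$; then $F=\psi^{-1}(C)$ is closed. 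For the direction $\tau_2\subset\tau_1$, your case split on $\Deg(\varphi)$ is the standard reduction, and the ``obstacle'' you flag in the case $\Deg(\varphi)>d$ is genuine but routine: restricting the pseudo-automorphism to $Z\times\PP^n$ for $Z$ an irreducible component of $\varphi^{-1}(\birn_{\leq d})$ still satisfies Definition~\ref{defi_pseudo} because the open set $U$ there meets every fibre $\{t\}\times\PP^n$, hence $U\cap(Z\times\PP^n)$ is dense in $Z\times\PP^n$ and meets each fibre over $Z$.
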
 
\qed

\subsection{Chevalley type Theorem}

\begin{thm}\label{thm:chevalley}
Let $X$ be a rational variety. If $\varphi:T\to \bir(X)$ is a morphism
and $C\subset T$ is a constructible set, then $\varphi(C)$ is
constructible and contains a dense open subset of $\overline{\varphi(C)}$.   
\end{thm}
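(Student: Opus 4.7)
The plan is to reduce first to the case $X=\PP^n$ via Lemma \ref{lem:functoriality}: the map $F^*:\bir(\PP^n)\to\bir(X)$ induced by a birational map $F:X\dashrightarrow\PP^n$ is a homeomorphism, and so preserves both constructibility and closures. It thus suffices to treat morphisms $\varphi:T\to\birn$.

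I would establish constructibility of $\varphi(C)$ in $\birn$ by induction on $d:=\Deg(\varphi)$. Write $C=(C\cap U_\varphi)\sqcup(C\setminus U_\varphi)$. By Theorem \ref{thm4.4}(a), $\varphi|_{U_\varphi}:U_\varphi\to\birn_d$ is a morphism of algebraic varieties, so the classical Chevalley theorem yields that $\varphi(C\cap U_\varphi)$ is constructible in $\birn_d$. Since $\birn_d$ is locally closed in $\birn$ with subspace topology coinciding with its algebraic variety topology (Theorem \ref{thm4.4}(b)), this image is also constructible in $\birn$. For $C\setminus U_\varphi$, decompose the closed set $T\setminus U_\varphi$ into its irreducible components $X_1,\dots,X_r$; by Proposition \ref{pro4.1}(b), $\Deg(\varphi|_{X_i})<d$ for each $i$ because $\deg(\varphi_t)<d$ on $X_i$. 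The inductive hypothesis applied to each $\varphi|_{X_i}:X_i\to\birn$ and to the constructible set $C\cap X_i$ gives that $\varphi(C\cap X_i)$ is constructible, so $\varphi(C)$ is a finite union of constructible sets, hence itself constructible. The base case $d=1$ is immediate: $\varphi(T)\subset\birn_{\leq 1}=\pgl(n+1,\Bbbk)$, and classical Chevalley applies directly.

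For the density assertion, the decisive observation is that $\varphi(C)\subset\birn_{\leq d}$, which is closed in $\birn$ by Corollary \ref{cor4.3}; hence the closure of $\varphi(C)$ in $\birn$ agrees, as a topological subspace, with its closure in $\birn_{\leq d}$. I would then verify that $\birn_{\leq d}$ is a \emph{noetherian} topological space: any descending chain of closed subsets intersects each stratum $\birn_e$ ($e\leq d$) in a descending chain that stabilizes (because $\birn_e$ is a noetherian algebraic variety whose topology equals the one induced from $\birn$), and since there are only finitely many strata, the original chain stabilizes as well. In a noetherian space it is classical that every constructible subset contains a dense open subset of its closure, which concludes the proof.

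The principal delicacy is an implicit point in the inductive step: one must check that the restriction $\varphi|_{X_i}$ to a closed irreducible subvariety $X_i\subset T$ is still a morphism in the sense of Definition \ref{defi_pseudo}, so that Proposition \ref{pro4.1} applies and yields $\Deg(\varphi|_{X_i})<d$. This is a local matter on writings: given $t\in X_i$, one picks a writing $w^\varphi_V=(f_0,\dots,f_n)$ of $\varphi$ passing through $t$ and verifies that its restriction to $V\cap X_i$ remains a well-defined writing of $\varphi|_{X_i}$ through $t$, possibly after shrinking $V$ and dividing out any higher common factor acquired by the $f_i|_{V\cap X_i}$ along $X_i$ near $t$.
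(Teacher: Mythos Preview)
Your proof is correct and follows essentially the same route as the paper: reduce to $\PP^n$ via Lemma~\ref{lem:functoriality}, stratify $T$ by $\deg(\varphi_t)$ (you package this as an induction on $\Deg(\varphi)$, the paper invokes the stratification of Theorem~\ref{thm4.2} directly), and apply classical Chevalley on each stratum via Theorem~\ref{thm4.4}. For the density assertion the paper instead writes down the explicit dense open subset $\overline{\varphi(C)}\setminus\bigcup_i\bigl(\overline{Z_i}\setminus Z_i\bigr)$ for a decomposition $\varphi(C)=\bigcup_i Z_i$ into locally closed pieces, but your noetherianness argument for $\birn_{\leq d}$ is equally valid.
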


\begin{proof}
By Lemma \ref{lem:functoriality} we may suppose $X=\P^n$ and $\varphi$
with degree $d=\Deg(\varphi)$. Hence $\overline{\varphi(T)}\subset
\birn_{\leq d}$; we consider the morphism
$\varphi_0:U_0=U_\varphi\to\birn_d$ induced by $\varphi$.  

On the other hand, Theorem \ref{thm4.2} gives a stratification
$T\backslash U_0=\cup V_j^{\ell}$ by locally closed sets such that
$d_j:=\deg\bigl(\varphi(t)\bigr)$ is constant on each $V_j$; set
$\varphi_j:V_j\to \birn_{d_j}$ the morphism induced by $\varphi$ on
$V_j$. 
   
We deduce that $\varphi(C)$ is constructible  by using Theorem
\ref{thm4.4} and  applying the 
standard Chevalley Theorem to the morphisms
$\varphi_0,\varphi_1,\ldots, \varphi_\ell$.  The last assertion of the
theorem is a  general topology result: since $\varphi(C)$ is
constructible, then $\varphi(C)=\cup_{i=1}^\ell Z_i$, where $Z_i$ is a locally
closed subset for all $i=1,\dots , \ell$. Then
\[
\varphi(C)\setminus \cup_i \bigl(\overline{Z_i}\setminus Z_i\bigr)=
\overline{\varphi(C)} \setminus \cup_i \bigl(\overline{Z_i}\setminus Z_i\bigr)
\]  
is a dense open subset of   $\overline{\varphi(C)}$.
\end{proof}

\subsection{Cyclic closed subgroups}

\begin{cor}
Let $\{f_m\}\subset \birn$ be a infinite sequence of birational
maps. Then  $\{f_m\}$ is closed if and only if 
$\lim_{m\to\infty}\deg(f_m)=\infty$. In particular, the Zariski
topology on $\bir(\PP^n)$ is not Noetherian. 
\end{cor}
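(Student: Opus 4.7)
The strategy is to combine the reduction afforded by Corollary \ref{cor4.3} (a set is closed in $\bir(\PP^n)$ iff its intersection with each $\bir(\PP^n)_{\leq d}$ is closed) with the $T_1$-property of Lemma \ref{lem2.3} and the quasi-projective structure of the strata $\bir(\PP^n)_d$ supplied by Theorem \ref{thm4.4}; the non-Noetherian conclusion will then be obtained by exhibiting an explicit strictly descending chain of closed subsets.

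For the direction $\deg(f_m)\to\infty \Rightarrow \{f_m\}$ closed, I would first observe that for every $d$ only finitely many $f_m$ lie in $\bir(\PP^n)_{\leq d}$. Since a finite subset of a $T_1$ space is closed, invoking Corollary \ref{cor4.3} immediately yields that $\{f_m\}$ is closed in $\bir(\PP^n)$.

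The main part will be the converse, which I plan to handle by contrapositive. If the degrees do not tend to infinity, there is $d$ so that $S:=\{f_m\}\cap \bir(\PP^n)_{\leq d}$ is infinite. Using the stratification $\bir(\PP^n)_{\leq d}=\bigsqcup_{e=1}^{d}\bir(\PP^n)_e$ together with the pigeonhole principle, some $S_e:=S\cap \bir(\PP^n)_e$ is infinite. If $\{f_m\}$ were closed, Theorem \ref{thm4.4}(b) would display $S_e$ as a Zariski-closed subset of the quasi-projective variety $\bir(\PP^n)_e$, hence as a finite union of irreducible closed subvarieties, and infinitude of $S_e$ would force at least one component to have positive dimension. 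The key point — and the place where the size of the base field is implicitly used — is that an irreducible variety of positive dimension over an (uncountable) algebraically closed field $\Bbbk$ has at least $|\Bbbk|$ points, contradicting the countability of $S$.

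The non-Noetherianity of $\bir(\PP^n)$ will then follow immediately: by Remark \ref{rem4.4} one may pick a sequence with $\deg(f_m)\to\infty$, and the direction already proved shows that every tail $T_k:=\bigl\{f_m\mathrel{:}m\geq k\bigr\}$ is closed. The strictly descending chain $T_1\supsetneq T_2\supsetneq \cdots$ of closed subsets then witnesses the failure of the Noetherian property.
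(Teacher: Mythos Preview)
Your proof is correct and follows essentially the same route as the paper: use Corollary \ref{cor4.3} to reduce closedness to the finite intersections $\{f_m\}\cap\birn_{\leq d}$ (which the $T_1$ property handles), and for the converse argue that an infinite countable set cannot be Zariski-closed in the quasi-projective variety $\birn_e$. Your explicit flag that this last step needs $\Bbbk$ uncountable is well taken---the paper leaves it implicit, and indeed the statement fails over a countable algebraically closed field (enumerate $\pgl(n+1,\Bbbk)=\birn_{\leq 1}$).
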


\begin{proof}
Let $\varphi :T\to \birn$ be a morphism, with $\Deg(\varphi)=d$. Then
there exists  $m_0$  such that $\deg(f_m)\geq d$ for all $m\geq m_0$, and
thus $\varphi^{-1}\bigl(\{f_m\}\bigr)$ is finite. Hence, the if
follows from   Corollary \ref{cor4.3} and  Theorem \ref{thm4.4}. 

Conversely, suppose that
$\liminf_{m\to\infty}\deg(f_m)=d<\infty$. Then there exist infinitely 
many $f_i$ whose degree is $d$. Hence, $\{f_m\}\cap \bir(\PP^n)_{d}$ is
an infinite countable subset of the algebraic variety and thus it is  not closed. 
\end{proof}

\begin{cor}\label{cor:discret_subgroups}
Let $f\in\birn$ be a birational map of degree $d$. The cyclic subgroup $\langle f\rangle$ generated by $f$ is closed if and only if either $f$ is of finite order or $\lim_{m\to\infty}\deg(f^m)=\infty$. \qed
\end{cor}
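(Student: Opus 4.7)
\medskip

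\noindent\textbf{Proof plan.} The plan is to split according to whether $f$ has finite or infinite order and in each case to reduce to the preceding corollary.

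If $f$ has finite order, then $\langle f\rangle$ is a finite subset of $\birn$. By Lemma~\ref{lem2.3} the Zariski topology on $\birn$ is $T_1$, so every finite set is closed; hence $\langle f\rangle$ is closed. This takes care of the first alternative of the ``if'' direction and shows that in the finite-order case there is nothing to check about degrees.

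If $f$ has infinite order, then $\langle f\rangle=\{f^m\mathrel{:} m\in\Z\}$ is an infinite subset of $\birn$. Fixing any bijection $\Z\to\N$, I view $\langle f\rangle$ as an infinite sequence of birational maps and apply the preceding corollary: $\langle f\rangle$ is closed if and only if the degrees $\deg(f^m)$ tend to infinity along the enumeration, equivalently as $|m|\to\infty$ in $\Z$. Reading the condition ``$\lim_{m\to\infty}\deg(f^m)=\infty$'' in the statement as $|m|\to\infty$, this is exactly the second alternative, so both directions of the equivalence follow simultaneously.

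The only point that requires a moment's thought, and the one I would pause on, is the two-sided indexing of the cyclic group. One must insist that \emph{both} positive and negative powers of $f$ have unbounded degrees: otherwise infinitely many $f^m$ of bounded degree would accumulate inside some $\birn_{\leq d}$, and by Corollary~\ref{cor4.3} together with the fact (used in the proof of the previous corollary) that a countably infinite subset of the algebraic variety $\birn_e$ cannot be closed, $\langle f\rangle\cap\birn_{\leq d}$ and hence $\langle f\rangle$ would fail to be closed. No further obstacle arises.
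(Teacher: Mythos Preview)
Your argument is correct and matches the paper's intent: the paper gives no proof at all (just the \qed\ symbol), treating the statement as an immediate consequence of the preceding corollary, and your write-up is precisely the natural unpacking of that implication. Your explicit handling of the two-sided indexing---insisting that the condition be read as $|m|\to\infty$ so that both positive and negative powers are controlled---is a genuine clarification of a point the paper's formulation leaves ambiguous, and your justification via Corollary~\ref{cor4.3} is sound.
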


When $n=2$ the behavior of the sequence $\deg(f^m)$ is well
known. Applying Corollary \ref{cor:discret_subgroups} one can thus
characterize when $\langle f \rangle$ is closed in $\bir (\PP^2)$.

\begin{cor}
Let $f\in \bir(\PP^2)$. Then the following assertions are equivalent:
\begin{enumerate}
\item $\langle f\rangle$ is not closed in $\bir(\PP^2)$.

\item $f$ has infinite order and $\bigl\{ \deg(f^m)\bigr\}_{m\in\N}$
  is  bounded.

\item $f$ is conjugated to an element of infinite order of
  $\operatorname{PGL}(3,\C)$. 
\end{enumerate}
\end{cor}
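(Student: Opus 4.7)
The plan is to deduce the three equivalences by combining Corollary \ref{cor:discret_subgroups} with the well-known classification of the asymptotic behaviour of the degree sequence $\bigl(\deg(f^m)\bigr)_{m\in\N}$ for birational self-maps of $\PP^2$, to which the paragraph preceding the statement refers.

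For $(1)\Leftrightarrow(2)$, Corollary \ref{cor:discret_subgroups} asserts that $\langle f\rangle$ is closed if and only if $f$ has finite order or $\lim_{m\to\infty}\deg(f^m)=\infty$; negating, $(1)$ holds if and only if $f$ has infinite order \emph{and} $\deg(f^m)\not\to\infty$. The Diller--Favre trichotomy in dimension two says that for any $f\in\bir(\PP^2)$ the sequence $\bigl(\deg(f^m)\bigr)_m$ either stays bounded or tends to infinity (growing linearly, quadratically, or exponentially, according to whether $f$ preserves a rational fibration, an elliptic fibration, or has positive first dynamical degree). Hence the condition "$\deg(f^m)\not\to\infty$" is equivalent to boundedness, which is precisely $(2)$.

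The implication $(3)\Rightarrow(2)$ is elementary: writing $f=ghg^{-1}$ with $h\in\pgl(3,\C)$ of infinite order, $f$ inherits the infinite order of $h$, and the sub-multiplicativity of the degree under composition gives
\[
\deg(f^m)=\deg\bigl(gh^mg^{-1}\bigr)\le \deg(g)\deg(h^m)\deg(g^{-1})=\deg(g)\deg(g^{-1}),
\]
a bound independent of $m$. The implication $(2)\Rightarrow(3)$ is the deep direction: I would invoke the classification of \emph{elliptic} elements of $\bir(\PP^2)$, due to Diller--Favre (with refinements by Gizatullin, Blanc and Cantat). It states that any infinite-order $f\in\bir(\PP^2)$ with bounded degree sequence is regularisable as an automorphism of some smooth projective rational surface, and a further birational conjugation (descending through contractions of extremal rays) places $f$ inside $\aut(\PP^2)=\pgl(3,\C)$ as an element of infinite order.

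The main obstacle is precisely this last implication: the general topological and semicontinuity results developed earlier in the paper are not enough on their own to produce the explicit conjugating map, and one must rely on the non-trivial dynamical classification of birational self-maps of surfaces to conclude.
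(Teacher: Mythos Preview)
Your proposal is correct and follows essentially the same route as the paper: the equivalence $(1)\Leftrightarrow(2)$ is obtained by combining Corollary~\ref{cor:discret_subgroups} with the Diller--Favre dichotomy \cite{DiFa}, and the equivalence $(2)\Leftrightarrow(3)$ is the cited dynamical classification of elliptic elements. The only difference is bibliographic: the paper attributes $(2)\Leftrightarrow(3)$ specifically to \cite[Thm.~A]{BlDe13} (Blanc--D\'eserti), which packages exactly the regularisation-and-descent argument you sketch.
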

\begin{proof}
 Indeed,  following \cite{DiFa}, if $\langle f\rangle$ is infinite,
 then the sequence $\deg(f^m)$  either is bounded or grows with order
 at least $m$.
Hence, the infinite cyclic group $\langle f\rangle$ is not closed only
when the sequence  $\deg(f^m)$ is bounded.
The remaining equivalence follows from \cite[Thm. A]{BlDe13}.
\end{proof}

% 

% (a)  $\deg(f^m)\leq b$ for some positive $b\in \R$.

% (b)  $am\leq \deg(f^m)\leq bm$ for some positive $a,b\in\R$.

% (c)   $ am^2\leq \deg(f^m)\leq bm^2$ for some positive $a,b\in\R$.

% (d)  $ am^d\leq \deg(f^m)\leq bm^d$ for some positive $a,b\in\R$ where $d=\deg(f)$.

\subsection{Some big closed subgroups}\ %

Let $o\in\PP^n$ be a point. Consider the subgroup $\staro\subset
\birn$ of birational transformations which stabilize (birationality)
the set of lines passing through $o$. If $o'$ is another point
$\staro$ and $\star_{o'}(\PP^n)$ may be conjugated by mean of a linear
automorphism; in the sequel we fix $o=(1:0:\cdots:0)$. In \cite{Do}
the group $\staro$ is introduced in a different form and is called the
\emph{de Jonqui\`eres subgroup of level $n-1$} (see also \cite{Pa}).  

Let $\pi:\PP^n\tor\PP^{n-1}$ be the projection of center $o$ defined by
\[
(x_0:x_1:\cdots:x_n)\mapsto (x_1:\cdots:x_n).
\]
Then $\staro=\{f\in\birn: \exists \tau\in \bir(\PP^{n-1})\,, \pi
f=\tau\pi\}$. Moreover, note that  
 $\staro$ is the semidirect product 
\[\xymatrix{1\ar@{->}[r]&\jono\ar@{->}[r]&\staro\ar@{->}[r]^\rho&\bir(\PP^{n-1})\ar@{->}[r]&1}
\]
where $\jono=\{f\in\birn: \pi f=\pi\}$ and $\rho$ is the evident
homomorphism, and $\tau=\rho(f)$.
Indeed, the morphism  $\sigma:\bir(\PP^{n-1})\to\bir(\PP^n)$ given by  
\[(h_1:\cdots:h_n)\mapsto (x_0h_1:x_1h_1:\cdots:x_1h_n)\]
is injective and such that $\sigma\bigl( \bir(\PP^{n-1})\bigr)\subset \staro$. Clearly,    $\rho{\scriptstyle 
  \circ}\sigma=id$.

Moreover, we affirm that $\rho$ is continuous, and $\sigma $ is a
continuous closed immersion. Indeed, if  $\varphi:T\to \birn$ is a
morphism then the
composition $\rho{\scriptstyle \circ}\varphi$ defines a morphism
$T\to\bir(\PP^{n-1})$; therefore $\rho$ is a continuous function. 
Clearly, $\sigma$ is continuous. In order to 
prove, among other things,  that $\sigma$ is a closed immersion we
need the following:
   
\begin{lem}\label{lem*}
Let $f\in \Bbbk[T]\otimes \Bbbk[x_0,\ldots,x_n]$ be a  polynomial,
homogeneous  in $\x$; denote by $\deg_{x_0}(f)$ its degree in
$x_0$. Then for all integer $m\geq 0$ and $i=0,\ldots,n$ the sets 
\[
R= \bigl\{t\in T\mathrel{:} x_i| f(t,\x)\bigr\}\ ,\ S_m= \bigl\{t\in T; \deg_{x_0}(f) \leq m\bigr\}\]  
are closed in $T$.
\end{lem}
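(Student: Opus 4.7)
The plan is to write $f$ explicitly in terms of its monomial expansion in $\x$ and translate both conditions into the vanishing of finitely many coefficients, each of which is a regular function on $T$.

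More concretely, I would first write
\[
f(t,\x)=\sum_{I\in\calj} c_I(t)\,\x^I,
\]
where $\calj$ denotes the set of multi-indices $I=(i_0,\dots,i_n)$ with $|I|=\deg_{\x}(f)$ (the homogeneous degree of $f$ in $\x$, which is constant in $t$), and $c_I\in\Bbbk[T]$.  This is possible because $f\in \Bbbk[T]\otimes\Bbbk[x_0,\dots,x_n]$ and is homogeneous in $\x$.

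For the set $S_m$: the polynomial $f(t,\x)$, viewed as an element of $\Bbbk[\x]$ for fixed $t$, has $\deg_{x_0}$-degree at most $m$ if and only if every monomial $\x^I$ with $i_0>m$ appears with zero coefficient, that is,
\[
S_m=\bigcap_{\substack{I\in\calj\\ i_0>m}}\bigl\{t\in T\mathrel{:} c_I(t)=0\bigr\}.
\]
This is a finite intersection of closed subsets of $T$, hence closed.

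For the set $R$: $x_i$ divides $f(t,\x)$ in $\Bbbk[\x]$ if and only if every monomial of $f(t,\x)$ containing no factor of $x_i$ vanishes; equivalently, $c_I(t)=0$ for every $I\in\calj$ with $i_i=0$. Thus
\[
R=\bigcap_{\substack{I\in\calj\\ i_i=0}}\bigl\{t\in T\mathrel{:} c_I(t)=0\bigr\},
\]
again a finite intersection of closed subsets of $T$.

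There is no real obstacle here: the key observation is simply that ``divisibility by $x_i$'' and ``bounded degree in $x_0$'' are linear conditions on the coefficients $c_I(t)$, so each set is cut out by a finite collection of polynomial equations in $t$.
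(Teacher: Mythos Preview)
Your proof is correct and follows essentially the same approach as the paper: the paper simply says that if $a_1,\dots,a_N\in\Bbbk[T]$ are the coefficients of $f$ as a polynomial in $\x$, then $R$ and $S_m$ are cut out as common zeroes of a subset of $\{a_1,\dots,a_N\}$. Your version just makes this explicit by identifying which coefficients must vanish in each case.
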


\begin{proof}
Let $a_1,\ldots,a_N\in \Bbbk[T]$ be the coefficients of $f$ as
polynomial in $x_0,\ldots,x_n$. It is clear that $R$ and $S_m$ are
defined as common zeroes of a subset of the polynomials 
$\{a_1,\ldots,a_N\}\subset \Bbbk[T]$.    
\end{proof}

\begin{thm}\label{thmfinal}
The subgroups  $\jono$ and $\staro$ are closed and $\sigma\bigl(\bir
(\PP^{n-1})\bigr)$ is closed 
in $\birn$. In particular, $\sigma$ is a closed immersion.
\end{thm}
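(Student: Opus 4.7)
The plan is to invoke Corollary \ref{cor4.3}: a subset $\calf \subset \birn$ is closed if and only if $\varphi^{-1}(\calf)$ is closed in $T$ for every morphism $\varphi: T \to \birn$. Fix such a $\varphi$. By the existence of writings through each point, cover $T$ by open sets of the form $U = V \setminus V_{n+1}$ where $w = w^\varphi_V = (f_0,\dots,f_n)$ is a writing of $\varphi$ on an affine open $V \subset T$; on such $U$, the formula $\varphi_t(\x) = \bigl(f_0(t,\x):\cdots:f_n(t,\x)\bigr)$ represents $\varphi_t$ (possibly non-minimally). Closedness being local, it suffices to express each of the three membership conditions as the simultaneous vanishing of finitely many polynomials in $\Bbbk[V] \otimes \Bbbk[\x]$ and then equate their monomial coefficients in $\x$ to zero; by the argument of Lemma \ref{lem*} this cuts out a closed subset of $V$.

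The case $\jono$ is immediate: $\varphi_t \in \jono$ if and only if the rational maps $\bigl(f_1(t,\x):\cdots:f_n(t,\x)\bigr)$ and $(x_1:\cdots:x_n)$ coincide, i.e.\ $x_j f_i(t,\x) - x_i f_j(t,\x) \equiv 0$ in $\x$ for all $1 \le i,j \le n$. A direct computation from $\sigma(h_1:\cdots:h_n) = (x_0 h_1:x_1 h_1:x_1 h_2:\cdots:x_1 h_n)$ shows that $f \in \sigma\bigl(\bir(\PP^{n-1})\bigr)$ if and only if $f \in \staro$ together with the single additional identity $x_0 f_1 - x_1 f_0 \equiv 0$: the forward direction is trivial, and conversely, using $f \in \staro$ one writes $(f_1:\cdots:f_n) = \lambda\cdot(\tau_1:\cdots:\tau_n)$ with $\tau_i \in \Bbbk[x_1,\dots,x_n]$, and then the extra identity yields $f = \sigma(\tau_1:\cdots:\tau_n)$. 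Hence once $\staro$ is shown closed, closedness of $\sigma\bigl(\bir(\PP^{n-1})\bigr)$ follows from yet another instance of the polynomial-coefficient argument.

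The main obstacle is the case of $\staro$. I would expand
\[
f_i(t,x_0,x_1,\dots,x_n) \,=\, \sum_{k=0}^{d} x_0^k\, g_{i,k}(t,x_1,\dots,x_n),
\]
and observe that $\varphi_t \in \staro$ if and only if $\x \mapsto \bigl(f_1(t,\x):\cdots:f_n(t,\x)\bigr)$ is constant on each fiber of $\pi$ (a line through $o$ parametrized by $x_0$). This is equivalent to the pairwise proportionality, as $k$ varies, of the column vectors $\bigl(g_{i,k}(t,x_1,\dots,x_n)\bigr)_{1\le i\le n}$, i.e.\ to the identical vanishing in $(x_1,\dots,x_n)$ of all $2\times 2$ minors $g_{i,k}g_{j,\ell} - g_{i,\ell}g_{j,k}$ for $1\le i,j\le n$ and $0\le k,\ell\le d$. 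The delicate point is that this rank-one condition must be intrinsic to the rational map $\varphi_t$ and not depend on the common-factor ambiguity of the writing; this holds because the characterization ``$(f_1:\cdots:f_n)$ is constant on the fibers of $\pi$'' is visibly a property of the rational map alone. Equating the monomial coefficients in $x_1,\dots,x_n$ of each such minor to zero then produces the desired closed subset of $V$.

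For the final assertion, $\sigma$ is manifestly continuous, its image is closed by the above, and from $\rho \circ \sigma = \mathrm{id}$ the inverse of $\sigma$ onto its image is the restriction of $\rho$, which is continuous by the remark immediately preceding the statement. Therefore $\sigma$ is a homeomorphism onto the closed subset $\sigma\bigl(\bir(\PP^{n-1})\bigr)$, that is, a closed immersion.
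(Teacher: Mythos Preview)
Your proof is correct and takes a genuinely different route from the paper.

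The paper argues by nets: given a morphism $\varphi:T\to\birn$ and a net $(t_\xi)$ in $\varphi^{-1}(\jono)$ (respectively $\varphi^{-1}(\sigma(\bir(\PP^{n-1})))$) with limit $t_\infty$, it first uses the stratification of Theorem~\ref{thm4.2} to reduce to the case where all $t_\xi$ lie in $U_\varphi$, then invokes the explicit structural description of elements of $\jono$ from \cite{Pa} (namely $f_i=x_iq$ for $i>0$, with degree constraints in $x_0$) and passes to the limit via Lemma~\ref{lem*}, finally checking that any common factor arising in the limit does not destroy the structural form. Having closed $\jono$ and $\sigma(\bir(\PP^{n-1}))$ separately, the paper deduces that $\staro$ is closed from the semidirect product decomposition $\staro=\jono\cdot\sigma(\bir(\PP^{n-1}))$ together with the continuity of $\rho$, the product, and inversion.

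Your approach instead characterises membership in each subgroup by polynomial identities that are \emph{intrinsic to the rational map} (hence independent of the common-factor ambiguity of a writing), so that a single application of the coefficient-vanishing argument of Lemma~\ref{lem*} on any writing through $t$ suffices, with no appeal to the stratification or to nets. Your rank-one criterion for $\staro$ is the substantive new ingredient: it directly encodes ``$(f_1:\cdots:f_n)$ factors through $\pi$'' and, combined with the observation that birationality of $\varphi_t$ forces the induced self-map of $\PP^{n-1}$ to be birational, yields $\staro$ closed without passing through the semidirect product. You then reverse the paper's logical order, obtaining $\sigma(\bir(\PP^{n-1}))$ as $\staro$ intersected with the single extra closed condition $x_0f_1-x_1f_0\equiv 0$. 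The trade-off: your argument is more elementary and self-contained (no stratification, no external reference to \cite{Pa}), while the paper's argument makes the group-theoretic structure of $\staro$ do the work and avoids the rank-one minor computation. Both arguments handle the final ``closed immersion'' claim identically, via $\rho\circ\sigma=\mathrm{id}$.
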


\begin{proof}
Let $\varphi:T\to\birn$ be a morphism, say with $\Deg(\varphi)=d$. In
order to prove that $\varphi^{-1}\bigl(\jono\bigr)$  is closed it suffices to
consider a net $(t_\xi)$ in $\varphi^{-1}\bigl(\jono\bigr)$, where $\xi$ varies in a
directed set, and show that every limit point $t_\infty\in T$ of that
net satisfies $\varphi(t_\infty)\in \jono$. Let $t_\infty$ be such a
limit point and $T=\cup_{j=0}^lV_j$ be the 
stratification given by Theorem \ref{thm4.2}(a), where
$V_0=U_\varphi$ is the open set introduced in Proposition \ref{pro4.1}. Then there exists $j$ such that the subnet $(t_\xi)\cap
 V_j$ has $t_\infty$ as limit point. Thus, we  can  assume $t_\xi\in U_\varphi$ for all $\xi$, that is that $\deg(\varphi_{t_\xi})=d$. By shrinking $T$, if necessary, we may assume 
\[\varphi(t,\x)=\bigl(f_0(t,\x):\cdots:f_n(t,\x)\bigr),\]
where  $f_i\in \Bbbk[T]\otimes \Bbbk[\x]$ are homogeneous in
$\x=\{x_0,\ldots,x_n\}$ of degree $d$ (see Corollary \ref{corowritngdeg}(b)). From the description given in
\cite[\S 2]{Pa} it follows that for all $\xi$ there exists a
homogeneous polynomial $q_\xi\in \Bbbk[\x]$ such that: 
\begin{itemize}
\item[(a)]  $f_i(t_\xi,\x)=x_iq_\xi(\x)$, for $i>0$;
\item[(b)] $f_0(t_\xi,\x)$ and $q_\xi(\x)$ have degrees $\leq 1$ in $x_0$;
\item[(c)]  $f_0(t_\xi,\x)q_\xi(\x)$ has degree $\geq 1$ in $x_0$. 
\end{itemize}   

By Lemma \ref{lem*}, when $t_\xi$ specializes to $t_\infty$, then
$\varphi_\xi=\varphi(t_\xi)$ specializes to the birational map 
$\varphi_{t_\infty}=(f:x_1q:\cdots:x_nq):\PP^n\tor\PP^n$, where 
$f(\x)$ and $x_iq(\x)$, $i>0$, are polynomials in $\x$ of degree $d$
and with degree $\leq 1$ in $x_0$. Suppose that $f$ and $q$ admit a
common factor  $h\in \Bbbk[x_0,\ldots,x_n]$, of degree $\geq 0$. Since
the limit map $\varphi_{t_\infty}$
is birational (of degree $\leq d$) we deduce that $h\in
\Bbbk[x_1,\ldots,x_n]$: otherwise $h$ would have degree $1$ in $x_0$
and the map $\varphi_{t_\infty}$ would be defined by polynomials in
$x_1,\ldots,x_n$ contradicting birationality. Hence $f':=f/h$ and
$q':=q/h$ satisfy the conditions (b) and (c) above. We conclude
$\varphi_{t_\infty}=(f':x_1q':\cdots:x_nq')$. Applying again the
description of \cite[\S 2]{Pa}, we deduce that 
$\pi\varphi_{t_\infty}=\pi$ , that is  $\varphi_{t_\infty}\in\jono$,
which proves $\jono$ is closed.     

In order to prove that $\sigma\bigl(\bir(\PP^{n-1})\bigr)$ is  closed, consider a
net  $(t_\xi)\subset \varphi^{-1}\bigl(\sigma(\PP^{n-1})\bigr)$, with
limit 
point $t_\infty$. As
before, we can assume that $t_\xi \in U_\varphi$ for all $\xi$. With the notation
introduced above we have that
\begin{itemize}
\item[(a)]  $f_i(t_\xi,\x)=x_1h_{i,\xi}(\x)$, for $i>0$, and
\item[(b)] $f_0(t_\xi,\x)=x_0h_{1,\xi}(\x)$,
\end{itemize}   
where $\tau_\xi=(h_{1,\xi}:\cdots:h_{n,\xi}):\PP^{n-1}\tor\PP^{n-1}$ is
birational. From Lemma \ref{lem*} we obtain
that  $h_{i,\xi}$ specializes to a polynomial $h_i\in
\Bbbk[x_1,\ldots,x_n]$, $i>0$ and that 
$\varphi_{t_\infty}=(x_0h_1:x_1h_1:\cdots:x_1h_n)$. Since
$\pi\varphi_{t_\infty}=\tau_{\xi}\pi$ we conclude that
$\varphi_{t_\infty}\in\staro$ and thus $(h_1:\cdots:h_n)\in
\bir(\PP^{n-1})$ (\cite[Prop. 2.2]{Pa}).  Since 
$\sigma\bigl((h_1:\cdots:h_n)\bigr)=\varphi_{t_\infty}$, it follows
that $\sigma \bigl( \bir(\PP^{n-1})\bigr)$ is closed.   

Finally,  since for elements $f\in\jono$ and $h\in \bir(\PP^{n-1})$
the product $f\rtimes h$ is  the composition $f{\scriptstyle
  \circ}\sigma(h)$,  then 
$\staro=\jono Im(\sigma)$ (product in $\birn$). The fact that $\staro$
is closed follows then 
from the two assertions we have just proved together with the
continuity of the functions $\rho:\staro \to \bir\bigl(\PP^{n-1}\bigr)$, the
group product and the group 
inversion. Indeed, let  $(f_\xi\rtimes h_\xi)$ be a net in $\staro$
which specializes to $s\in\birn$. Then $\rho( f_\xi\rtimes
h_\xi)=\rho(1\rtimes h_\xi)=h_\xi$ specializes to  $\rho(s)=h\in
\bir(\PP^{n-1})$. Since $(f_\xi\rtimes h_\xi)\cdot (1\rtimes
h_\xi^{-1})=f_\xi\rtimes 1\in \jono$, the net $(f_\xi\rtimes 1)$
specializes to $s\sigma(h^{-1})\in\jono$. Thus $s\in\staro$. 
\end{proof}

\begin{rem}
More generally, for $\ell=1,\ldots, n$, the map $\sigma_\ell:\bir(\PP^{n-1})\to \birn$ defined by
 \[\sigma_\ell\bigl((h_1:\cdots:h_n)\bigr)= (x_0h_\ell:x_\ell h_1:\cdots:x_\ell h_n)\]
is a continuous, closed, homomorphism whose image is contained in
$\staro$ and such that $\rho\sigma_\ell=id$. In this notation, the map
$\sigma$ of Theorem \ref{thmfinal} is $\sigma_1$. Moreover, one has
\[
\bigcap_{\ell=1}^n \sigma_\ell\bigl(\bir(\PP^{n-1})\bigr)=\{id\}.
\]

If $\calu_\ell$ is the dense open set $\birn\backslash
\sigma_\ell\bigl(\bir(\PP^{n-1})\bigr)$, then $\birn-\{id\}=\bigcup_{\ell=1}^n \calu_\ell$.
\end{rem}

\end{document}